\def\N{{\mathbb N}}
\def\C{{\mathbb C}}
\def\R{{\mathbb R}}
\def\Z{{\mathbb Z}}
\newcommand{\supp}{\mathsf{supp}~ }
\newcommand{\Span}{\mathsf{Span}~ }
\newcommand{\Ker}{\mathsf{Ker}~ }
\newcommand{\re}{\mathsf{Re}~ }
\newcommand{\Rank}{\mathsf{Rank}~ }
\newcommand{\dist}{\mathsf{dist} }
\newcommand{\cl}{\operatorname{\text{{\it cl}}}}
\newcommand{\interior}{\operatorname{\text{{\it int}}}}
\newcommand{\bb}[1]{\boldsymbol{#1}}
\numberwithin{equation}{section}
\newtheorem{theorem}{Theorem}[section]
\newtheorem{proposition}[theorem]{Proposition}
\newtheorem{corollary}[theorem]{Corollary}
\begin{document}

\title{On the Kronecker and Carath{\'e}odory-Fejer theorems in several variables}
\author{Fredrik Andersson, Marcus Carlsson}
\ead{fa@maths.lth.se}
\ead{mc@maths.lth.se}
\address{Centre for Mathematical Sciences, Lund University\\Box 118, SE-22100, Lund,  Sweden\\}

\begin{abstract}
Multivariate versions of the Kronecker theorem in the continuous multivariate setting has recently been published. These theorems characterize the symbols that give rise to finite rank multidimensional Hankel and Toeplitz type operators defined on general domains. In this paper we study how the additional assumption of positive semidefinite affects the characterization of the corresponding symbols, which we refer to as Carath{\'e}odory-Fejer type theorems. We show that these theorems become particularly transparent in the continuous setting, by providing elegant if and only if statements connecting the rank with sums of exponential functions. We also discuss how these objects can be discretized, giving rise to an interesting class of structured matrices that inherit these desirable properties from their continuous analogs. We describe how the continuous Kronecker theorem also applies to these structured matrices, given sufficient sampling. We also provide a new proof for the Carath{\'e}odory-Fejer theorem for block Toeplitz matrices, based on tools from tensor algebra.
\end{abstract}

\begin{keyword}
Hankel, Toeplitz, finite rank, positive semidefinite, sums of exponentials
\MSC[2010] Primary: 47B35 \sep 15A03. Secondary: 15A69 \sep 15B05 \sep 15B48 \sep 33B10 \sep 47A13
\end{keyword}
\maketitle

\section{Introduction}
The connection between low-rank Hankel and Toeplitz operators and matrices, and properties of the functions that generate them play a crucial role for instance in frequency estimation \cite{actwIEEE, kung1983state,ESPRIT, MUSIC, Stoica2005}, system identification \cite{chui1991system, doyle1989state, Kung_multivariable, kung1981state} and approximation theory \cite{andersson2010nonlinear, andersson2010nonlinear_wp, andersson2011sparse, beylkin2002generalized, beylkin2005approximation, beylkin2010approximation, peter2013generalized}. The reason for this is that there is a connection between the rank of the operators, and the fact that the functions that generate these operators and matrices, respectively, ``generically'' are sums of exponentials, where the number of terms is connected to the rank of the operators and matrices (Kronecker's theorem). Moreover, adding the condition of positive semidefinite imposes further restrictions on the sums of exponentials (Carathe\'{o}dory-Fejer's theorem), a result which underlies e.g. Pisarenko's famous method for frequency estimation \cite{pisarenko}.

We provide corresponding theorems in the multidimensional setting. In contrast to the one dimensional situation, the multidimensional framework provides substantial flexibility in how to define these operators. Whereas most previous research on multidimensional Hankel and Toeplitz type operators considers ``symbols'' or ``generating sequences'' $f$ that are defined on product domains, we here consider a framework where $f$ is defined on an open connected and bounded domain $\Omega$ in $\R^d$ (satisfying mild assumptions). Besides providing beautiful new theorems, it is our hope that the new results in this paper will pave the way for applications in multidimensional frequency estimation/approximation/compression, in analogy with the  use of Toeplitz and Hankel matrices in the one dimensional setting. For this reason, we present results both in the continuous and discretized setting, and discuss how they influence each other.

To present the key ideas, we here focus mainly on the continuous theory since it is more transparent. Let us denote the multidimensional Hankel type operators by $\Gamma_f$ and their Toeplitz counterparts by $\Theta_f$ (see Section \ref{contcase} for appropriate definitions). These operators were introduced in \cite{IEOT} where it is shown that if $\Gamma_f$ or $\Theta_f$ has rank $K<\infty$, then $f$ is necessarily an exponential polynomial; \begin{equation}\label{exppolintro}
f( x)=\sum_{j=1}^J p_j(x)e^{\zeta_j\cdot x}
\end{equation}
where $J<K$, $p_j$ are polynomials in $ x=(x_1,\ldots,x_d)$, $ \zeta_j\in\C^d$ and $\zeta_j\cdot  x$ denotes the standard scalar product $${\zeta}_k\cdot {x}=\sum_{m=1}^d \zeta_{j,m}x_m.$$ Conversely, any such exponential polynomial gives rise to finite rank $\Gamma_f$ and $\Theta_f$ respectively, and there is a method to determine the rank given the ``symbol'' \eqref{exppolintro}. Most notably, the rank equals $K$ if $f$ is of the form \begin{equation}\label{expsepintro}
f( x)=\sum_{k=1}^K c_ke^{ \zeta_k\cdot x},
\end{equation}
where $c_k\in\C$ (assuming that there is no cancelation in \eqref{expsepintro}). %We remark that some of these results have been found independently by K. Usevich in the 2 dimensional case, see ...

\begin{figure}
\centering 	
{\includegraphics[width=0.48\textwidth]{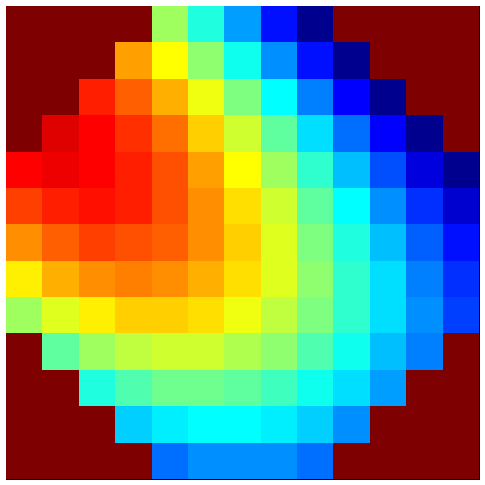}}
{\includegraphics[width=0.48\textwidth]{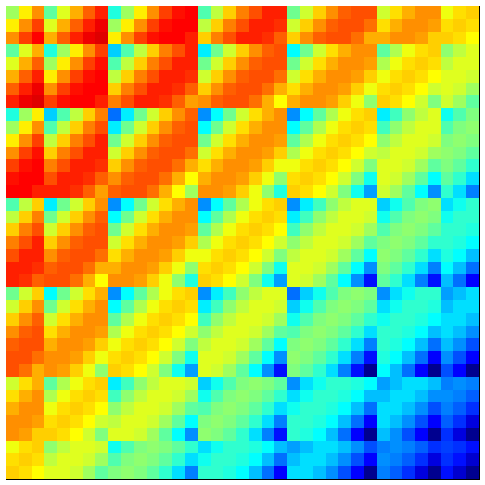}}
\caption{\label{fig:conv_support} a) ``Generating sequence'' defined on a disc $\Omega$; b) The matrix realization of the corresponding ``general domain Hankel operator'' (see Section \ref{secdisc} for further details).}
\end{figure}

In this paper, we study what happens if one adds the condition that $\Gamma_f$ or $\Theta_f$ be positive semi-definite (PSD). We prove that $\Theta_f$ then has rank $K$ if and only if $f$ is of the form \begin{equation}\label{expThetaintro}
f( x)=\sum_{k=1}^K c_ke^{i \xi_k\cdot x}
\end{equation}
where $c_k>0$ and $\xi_k\in\R^d$ (Theorem \ref{ty}), which in a certain sense is an extension of Carath{\'e}odory-Fejer's theorem on PSD Toeplitz matrices. Correspondingly, $\Gamma_f$ is PSD and has rank $K$ if and only if $f$ is of the form \begin{equation}\label{expGammaintro}
f( x)=\sum_{k=1}^K c_ke^{ \xi_k\cdot x}
\end{equation}
where again $c_k>0$ and $\xi_k\in\R^d$ (Theorem \ref{fisher}). Similar results for Hankel matrices date back to work of Fischer \cite{fischer1911}.

The only of the above results that has a simple counterpart in the finite dimensional discretized multivariable setting is the Carath{\'e}odory-Fejer's theorem, which has been observed previously in \cite{stoicachinese} (concerning block Toeplitz matrices). In this paper we provide a general result on tensor products, which can be used to ``lift'' structure results in one-dimension to the multi-dimensional setting. We use this to give an alternative proof of the discrete Carath{\'e}odory-Fejer theorem, which subsequently is used to prove the continuous counterpart.

Fischer's theorem on the other hand has no neat version in the multivariable finite dimensional setting, but has been generalized to so called small Hankel operators on $\ell^2(\N^d)$ in \cite{power}, a paper which also contains a result analog to \eqref{expGammaintro}.

However, the product domain setting is rather restrictive and not always a natural choice. Whereas one-dimensional ``symbols'' necessarily are defined on an interval, there is an abundance of possible regions to define their corresponding multidimensional cousins. Despite this, the majority of multivariate treatments of these issues are set either directly in a block-Toeplitz/Hankel setting, or rely on tensor products. In both cases the corresponding domain of definition $\Omega$ of the symbol is a square (or multi-cube), but for concrete applications to multidimensional frequency estimation, the available data need not be naturally defined on such a domain. In radially symmetric problems, a circle may be more suitable or, for boundary problems, a triangle might be more appropriate.

Concerning analogs of the above results for the discretized counterparts of $\Theta_f$ and $\Gamma_f$, we show in this paper how to construct ``structured matrices'' that approximate their continuous counterparts, and hence can be expected to inherit these desirable properties, given sufficient sampling rate. We give simple conditions on the regularity of $f$ and $\Omega$ needed for this to be successful. This gives rise to an interesting class of structured matrices, which we call ``general domain Hankel/Toeplitz matrices''. As an example, in Figure \ref{fig:conv_support} we have a ``generating sequence'' $f$ on a discretized disc, together with a plot of its general domain Hankel matrix. %Such matrices has previously been studied under the name ``quasi-Toeplitz/Hankel matrices'', relying on tools from algebraic geometry, see Section \ref{secmult} for further details.

The paper is organized as follows. In the next section we review the theory and at the same time introduce the operators we will be working with in the continuous setting (Section \ref{contcase}). The short Section \ref{tensorsec} provides a tool from tensor algebra, and also introduce useful notation for the discrete setting. Section \ref{sec4} discuss how to discretize the $\Gamma_f$'s and $\Theta_f$'s, and we discuss particular cases such as block Toeplitz and Hankel matrices. In Section \ref{sec5} we prove the Caratheodory-Fejer theorem in the discrete (block) setting. Section \ref{multikronecker} shows that the discrete operators approximate the continuous counterparts, given sufficient sampling rate, and we discuss Kronecker's theorem. Sections \ref{seclast} and \ref{secPSDGAmma} considers structure results under the PSD condition, first for $\Theta_f$'s and then for $\Gamma_f$'s. In the last section, we extend the above results to the corresponding operators on unbounded domains.

\section{Review of the field}

Suppose that a Hankel matrix $H$ or a Toeplitz matrix $T$ of size $N\times N$ is taken from samples of a ``discretized exponential polynomial'' \begin{equation}\label{a}\sum_{j=1}^J p_j(n)\lambda_j^n\end{equation} (where $\lambda_j\in\C$ and $n=1,\ldots,2N-1$) of total degree \begin{equation}\label{c}K=\sum_{j=1}^J(\deg p_j +1)\end{equation} strictly less than $N$.  Based on the fundamental theorem of algebra, one can show that the rank of either $H$ or $T$ equals $K$, and that the polynomials $p_j$ and the $\lambda_j$'s are unique. This observation is sometimes used hand-wavingly in the converse direction, which is not true. However, in terms of applications this doesn't matter because of the following stronger statement: If $T$ or $H$ has rank $K<N$ then its generating sequence is ``generically'' of the form \begin{equation}\label{b}\sum_{k=1}^K c_k\lambda_k^{n}.\end{equation} In the multidimensional setting, the corresponding statement is false. We refer to \cite{IEOT} for a longer discussion of these issues, especially Section 8. See also \cite{power,gu}. As an example of an exceptional case in the one-variable situation, consider the Hankel matrix \begin{equation}\label{hankelcounter}\left(
                                                                                 \begin{array}{ccccc}
                                                                                   1 & 0 & 0 & 0 & 0 \\
                                                                                   0 & 0 & 0 & 0 & 0 \\                                                                                   0 & 0 & 0 & 0 & 0 \\                                                                                   0 & 0 & 0 & 0 & 0 \\                                                                                   0 & 0 & 0 & 0 & 1 \\                                                                                 \end{array}
                                                                               \right)
 \end{equation} Clearly, the rank is 2 but the generating sequence $(1,0,0,0,0,0,0,0,1)$ is neither of the form \eqref{a} nor \eqref{b}. The book \cite{iokhvidov}, which has two sections devoted entirely to the topic of the rank of finite Toeplitz and Hankel matrices, give a number of exact theorems relating the rank with the ``characteristic'' of the corresponding matrix, which is a set of numbers related to when determinants of certain submatrices vanish. Another viewpoint has been taken by B. Mourrain et. al \cite{mourrainSymmetric,mourrainbook,lasserre,mourrain2000}, in which, loosely speaking, these matrices are analyzed using projective algebraic geometry and the 1 to the bottom right corresponds to the point $\infty$.

In either case, the complexity of the theory does not reflect the relatively simple interaction between rank and exponential sums, as indicated in the introduction. There are however a few exceptions in the discrete setting. Kronecker's theorem says that for a Hankel operator (i.e. an infinite Hankel matrix acting on $\ell^2(\N)$), the rank is $K$ if and only if the symbol is of the desired form \eqref{a} ($0^0$ defined as 1), with the restriction that $|\lambda_j|<1$ if one is only interested in bounded operators, see e.g. \cite{chui,katayama,kronecker,peller}. Also, it is finite rank and PSD if and only if the symbol is of the form \eqref{b} with $c_k>0$ and $\lambda_k\in(-1,1)$, a result which also has been extended to the multivariable (tensor product) setting \cite{power}. In contrast, there are no finite rank \textit{bounded} Toeplitz operators (on $\ell^2(\N)$). If boundedness is not an issue, then a version of Kroneckers theorem holds for Toeplitz operators as well \cite{EllisLay}.

Adding the PSD condition for a Toeplitz matrix yields a simple result which is valid (without exceptions) for finite matrices. This is the essence of what usually is called the Carath{\'e}odory-Fejer theorem. The result was used by Pisarenko \cite{pisarenko} to construct an algorithm for frequency estimation. Since then, this approach has rendered a lot of related algorithms, for instance the MUSIC method \cite{MUSIC}. We reproduce the statement here for the convenience of the reader. For a proof see e.g. Theorem 12 in \cite{krein} or Section 4 in \cite{szego}. Other relevant references include \cite{AAKCar,Ciccariello}.

\begin{theorem}\label{tr}
Let $T$ be a finite ${N+1}\times {N+1}$ Toeplitz matrix with generating sequence $(f_n)_{n=-N}^{N}$. Then $T$ is PSD and $\Rank T=K\leq N$ if and only if $f$ is of the form \begin{equation}\label{gd}
f(n)=\sum_{k=1}^Kc_k\lambda_k^n
\end{equation}
where $c_k>0$ and the $\lambda_k$'s are distinct and satisfy $|\lambda_k|=1$.
\end{theorem}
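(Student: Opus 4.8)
The plan is to prove the two implications separately, the forward (sufficiency) direction being routine and the converse (necessity) direction carrying the real content. For sufficiency, suppose $f(n)=\sum_{k=1}^K c_k\lambda_k^n$ with $c_k>0$, $|\lambda_k|=1$ and the $\lambda_k$ distinct. Writing $\lambda_k^{-1}=\overline{\lambda_k}$ and $v_k=(1,\lambda_k,\ldots,\lambda_k^N)^{\mathsf T}$, the Toeplitz entry $f_{i-j}=\sum_k c_k\lambda_k^{i}\,\overline{\lambda_k}^{\,j}$ gives the rank-one expansion $T=\sum_{k=1}^K c_k v_kv_k^*$. Since $c_k>0$ this is manifestly PSD, and since the $v_k$ are Vandermonde vectors with distinct nodes and $K\le N<N+1$ they are linearly independent, whence $\Rank T=K$.

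For necessity, assume $T\ge 0$ with $\Rank T=K\le N$. First I would show that the leading $K\times K$ principal block $T_K=(f_{i-j})_{i,j=0}^{K-1}$ is positive definite: if it were singular, a null vector $u$ (viewed as a polynomial $\hat u$ of degree $<K$) would, by the shift invariance $\tilde u^{(m)*}T\tilde u^{(m)}=u^*T_Ku=0$ of the Toeplitz form, produce the $N-\deg\hat u+1\ge N-K+2$ linearly independent kernel vectors $z^m\hat u$, forcing $\Rank T\le K-1$, a contradiction. Consequently the $(K+1)\times(K+1)$ block $T_{K+1}$ has rank exactly $K$, hence a one-dimensional kernel spanned by some $a=(a_0,\ldots,a_K)$ with $a_K\ne0$ (normalize $a_K=1$; $a_0\ne0$ then follows from the persymmetry $JT_{K+1}J=\overline{T_{K+1}}$, which gives $J\bar a=\gamma a$). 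The same shift-invariance argument shows that the shifts $z^m\hat a$, where $\hat a(z)=\sum_{l}a_lz^l$, exhaust $\Ker T$, and this is exactly the linear recurrence $\sum_{l=0}^K a_l f_{n-l}=0$; thus $f$ is an exponential sum built from the roots of $\hat a$.

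The crux, and the place where positivity is indispensable, is to show that $\hat a$ has $K$ simple roots, all of modulus one. For this I would pass to the quotient $W=\mathcal P_K/\Span\{\hat a\}$, which the form induced by $T_{K+1}$ turns into a genuine $K$-dimensional inner-product space precisely because $\Ker T_{K+1}=\Span\{\hat a\}$. On $W$ the multiplication-by-$z$ operator $C$, i.e.\ the companion operator of $\hat a$, is well defined, and the Toeplitz identity $\langle zp,zq\rangle_{T}=\langle p,q\rangle_{T}$ shows that $C$ is an isometry, hence unitary. Therefore its eigenvalues, which are exactly the roots of $\hat a$, lie on the unit circle, and unitarity forces diagonalizability, so these roots are simple. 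Tracing the correspondence back (the $\lambda_k$ being the reciprocal roots) yields $f(n)=\sum_{k=1}^K c_k\lambda_k^n$ with distinct $\lambda_k$ of modulus one.

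It remains to obtain $c_k>0$. Writing $T=\sum_k c_kv_kv_k^*$ with the now-known unimodular nodes and testing against vectors $w_k$ dual to the linearly independent Vandermonde system $\{v_k\}$ gives $c_k=w_k^*Tw_k\ge0$, and since $\Rank T=K$ none of the $c_k$ can vanish, so $c_k>0$. The main obstacle is the unitarity of $C$: all the surrounding steps are Toeplitz bookkeeping, whereas this single step is exactly where the PSD hypothesis is converted into the geometric statement that the frequencies sit on the unit circle with no confluence.
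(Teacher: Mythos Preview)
The paper does not actually give its own proof of this theorem: it is quoted as a classical result with references to Krein and Szeg\H{o} (and a few others), and is then used as a black box in the proof of Theorem~\ref{CFMD}. So there is no ``paper's proof'' to compare against.

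That said, your argument is correct and is essentially one of the standard proofs. The sufficiency direction is routine. For necessity, the chain
\[
T_K \text{ invertible} \;\Rightarrow\; \dim\Ker T_{K+1}=1 \;\Rightarrow\; \text{recurrence for }f \;\Rightarrow\; f(n)=\sum_k c_k\lambda_k^n
\]
is carried out cleanly; your count of shifts in $\Ker T$ is right (there are $N-K+1$ of them, matching $\dim\Ker T$), and the persymmetry argument for $a_0\neq 0$ is the standard one. The decisive step---that the companion operator $C$ of $\hat a$ is unitary for the inner product induced by $T_{K+1}$ on $\mathcal P_K/\langle\hat a\rangle$---is exactly where positivity enters, and your use of the Toeplitz shift identity $\langle zp,zq\rangle_T=\langle p,q\rangle_T$ on degree-$<K$ representatives is the correct mechanism. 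One small remark: unitarity gives diagonalizability, and it is the fact that the companion matrix of $\hat a$ is diagonalizable \emph{iff} $\hat a$ has simple roots that yields simplicity, not unitarity per se (the identity is unitary with a repeated eigenvalue); you clearly have the right idea, but the phrasing ``unitarity forces diagonalizability, so these roots are simple'' slightly elides this. The positivity of the $c_k$ via dual vectors is fine once you observe that Hermiticity of $T=\sum_k c_k v_kv_k^*$ with linearly independent $v_k$ already forces $c_k\in\mathbb{R}$.
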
 The corresponding situation for Hankel matrices $H$ is not as clean, since \eqref{hankelcounter} is PSD and has rank 2, but do not fit with the model \eqref{gd} for $c_k>0$ and real $\lambda_k$'s. Results of this type seems to go back to Fischer \cite{fischer1911}, and we will henceforth refer to statements relating the rank of PSD Hankel-type operators to the structure of their generating sequence/symbol, as ``Fischer-type theorems'' (see e.g. Theorem 5 \cite{krein} or \cite{fischer1911}). %We refer also to \cite{EllisLay} (Theorem 0.?) and \cite{Kreinahezier} (Theorem 5), but remark that both of these theorems are false since they do not take \eqref{hankelcounter} into consideration.
Corresponding results in the full rank case is found e.g. in \cite{tyrtyshnikov}.

We end this subsection with a few remarks on the practical use of Theorem \ref{tr}.
For a finitely sampled signal, the autocorrelation matrix can be estimated by $H^*H$ where $H$ is a (not necessarily square) Hankel matrix generated by the signal. This matrix will obviously be PSD, but in general it will not be Toeplitz. However, under the assumption that the $\lambda_k$'s in \eqref{gd} are well separated, the contribution from the scalar products of the different terms will be small
and might therefore be ignored. Under these assumptions on the data, the matrix $H^*H$ is PSD and approximately Toeplitz, which motivates the use of the Carath{\'e}odory-Fejer theorem as a means to retrieve the $\lambda_k$'s.

\subsection{Finite interval convolution and correlation operators}
The theory in the continuous case is much ``cleaner'' than in the discrete case. In this section we introduce the integral operator counterpart of Toeplitz and Hankel matrices, and discuss Kronecker's theorem in this setting.

Given a function on the interval $[-2,2]$, we define the finite interval convolution operator  $\Theta_f:L^2([-1,1])\rightarrow L^2([-1,1])$ by \begin{equation}\label{TCO1}\Theta_f(g)(x)=\int f(x-y)g(y)~dy.\end{equation} Replacing $x-y$ by $x+y$ we obtain the finite interval correlation operators $\Gamma_f$. These operators also go by the names Toeplitz and Hankel operators on the Paley-Wiener space. It is easy to see that if we discretize these operators, i.e. replace integrals by finite sums, then we get Toeplitz and Hankel matrices, respectively. More on this in Section \ref{secdisc}.

Kronecker's theorem (as formulated by Rochberg \cite{rochberg}) then states that $\Rank \Theta_f=K$ (and $\Rank \Gamma_f=K$) if and only if $f$ is of the form \begin{equation}\label{exp1d}f(x)=\sum_{j=1}^J p_j(x) e^{\zeta_j x}\end{equation} where $p_j$ are polynomials and $\zeta_j\in \C$. Moreover, the rank of $\Theta_f$ (or $\Gamma_f$) equals the total degree \begin{equation}\label{rank1d}K=\sum_{j=1}^J(\deg p_j+1).\end{equation}
However, functions of the form
\begin{equation}
  \label{expsep1d}f(x)=\sum_{k=1}^K c_k e^{\zeta_k x},\quad c_k,\zeta_k\in\C
\end{equation}
are known to be dense in the set of all symbols giving rise to rank $K$ finite interval convolution operators. Hence, the general form \eqref{exp1d} is hiding the following simpler statement, which often is of practical importance.
\textit{$\Theta_f$ generically  has rank $K$ if and only if $f$ is a sum of $K$ exponential functions.} As already noted, this is false in several variables, see \cite{IEOT}. The polynomial factors appear in the limit if two frequencies in \eqref{expsep1d} approach each other and interfere destructively, e.g.
\begin{equation}\label{gg}x=\lim_{\epsilon\rightarrow 0^+}\frac{e^{\epsilon x}-1}{\epsilon}.\end{equation}
This can heuristically explain why these factors do not appear when adding the PSD condition, since the functions on the right of \eqref{gg} give rise to one large positive and one large negative eigenvalue.

\subsection{The multidimensional continuous setting: TCO's}\label{contcase}

Given any (square integrable) function $f$ on an open connected and bounded set $\Omega$ in $\R^d$, $d\geq 1$, the natural counterpart to the operator
\eqref{TCO1} is the (General Domain) Truncated Convolution Operator (TCO) $\Theta_f:L^2(\Upsilon)\rightarrow L^2(\Xi)$ defined by \begin{equation}\label{TCOd}\Theta_f(g)({x})=\int_{\Upsilon}f({x}-{y})
g({y}) ~d{y},\quad {x}\in\Xi,\end{equation}
where $\Xi$ and $\Upsilon$ are connected open bounded sets such that
\begin{equation}\label{eq01}\Omega=\Xi-\Upsilon=\{{x}-{y}:~{x}\in\Xi,~{y}\in\Upsilon\}.\end{equation}
In \cite{IEOT} such TCO operators are studied, and their finite rank structure is completely characterized. It is easy to see that $\Theta_f$ has rank $K$ whenever $f$ has the form \begin{equation}\label{rep}
f({x})=\sum_{k=1}^K c_k e^{{\zeta}_k\cdot {x}},\quad {x}\in\Omega,
\end{equation}
where the ${\zeta_1},\ldots,{\zeta}_K\in\C^d$.

The reverse direction is however not as neat as in the one-dimensional case. It is true that the rank is finite only if $f$ is an exponential polynomial (Theorem 4.4 in \cite{IEOT}), but there is no counterpart to the simple formula \eqref{rank1d}. However, Section 5 (in \cite{IEOT}) gives a complete description of how to determine the rank given the symbol $f$ explicitly, Section 7 gives results on the generic rank based on the degree of the polynomials that appear in $f$, and we also provide lower bounds, and Section 8 investigates the fact that polynomial coefficients seem to appear more frequently in the multidimensional setting. Section 9 contains an investigation related to boundedness of these operators for the case of unbounded domains, which we will treat briefly in Section \ref{secUB} of the present paper.

If we instead set $\Omega=\Xi+\Upsilon$ then we may define the Truncated Correlation Operator
\begin{equation}\label{TCOdd}\Gamma_f(g)({x})=\int_{\Upsilon}f({x}+{y})
g({y}) ~d{y},\quad {x}\in\Xi.\end{equation}
This is the continuous analogue of finite Hankel (block) matrices. As in the finite dimensional case, there is no real difference
between $\Gamma_f$ and $\Theta_f$ regarding the finite rank structure. In fact, one turns into the other under composition with the ``trivial'' operator $\iota(f)({x})=f(-{x}),$ and thus all statements concerning the rank of one can easily be transferred to the other.

\subsection{Other multidimensional versions}\label{secmult}

The usual multidimensional framework is that of block-Hankel and block-Toeplitz matrices, tensor products, or so called ``small Hankel operators on $\ell^2(\N^d)$. In all cases, the generating sequence $f$ is forced to live on a product domain. For example, in \cite{hua} they consider the generating sequences of the form \eqref{expsepintro} (where $ x$ is on a discrete grid) and give conditions on the size of the block Hankel matrices under which the rank is $K$, and in \cite{stoicachinese} it is observed that the natural counterpart of the Carath{\'e}odory-Fejer theorem can be lifted by induction to the block Toeplitz setting. For the full rank case, factorizations of these kinds of operators have been investigated in \cite{feldmann, tis}. Extensions to multi-linear algebra are addressed for instance in \cite{papy2005exponential, papy2007shift,papy2009exponential}.

Concerning ``small Hankel operators'', in addition to \cite{power} we wish to mention \cite{gu} where a formula for actually determining the rank appears, although this is based on reduction over the dimension and hence not suitable for non-product domains.

%An alternative path was taken recently in \cite{IEOT}, in which it is observed that by passing from discrete to continuous, i.e. from (block-)Hankel/Toeplitz-matrices to corresponding integral operators, the issues with annoying exceptional cases disappear, providing transparent ``if and only if''-theorems. It turns out that the rank of such an operator is finite if and only if the ``symbol'' is an exponential polynomial, but unfortunately the simple formula \eqref{c} has no counterpart in the multidimensional setting. Given a fixed exponential polynomial, it is however possible to determine the rank. Rather surprisingly, it turns out however that for a fixed rank $K$, symbols with non-constant polynomials can be shown to be more common than the simpler ``sums of $K$ exponentials''. We refer to \cite{IEOT} for the details..

%Integral operators of the type considered in Section \ref{contcase} and \cite{IEOT} are also studied in the 2D setting in \cite{UsevichRussian}, although the setting is not as general. For instance, they show that (their counterpart of) $\Gamma_f$ or $\Theta_f$ has finite rank if and only if the symbol is an exponential polynomial.
There is some heuristic overlap between \cite{IEOT} and \cite{golyandina20102d,golyandina2009algebraic}. In \cite{golyandina20102d} they consider block Hankel matrices with polynomial symbols, and obtain results concerning their rank (Theorem 4.6) that overlap with Propositions 5.3, Theorem 7.4 and Proposition 7.7 of \cite{IEOT} for the 2d case. Proposition 7 in \cite{golyandina2009algebraic} is an extension to 2d of Kronecker's theorem for infinite block Hankel matrices (not truncated), which can be compared with Theorem 4.4 in \cite{IEOT}.

In the discrete setting, the work of B. Mourrain et al. considers a general domain context, and what they call ``quasi Toeplitz/Hankel matrices'' correspond to what here is called ''general domain Toeplitz/Hankel matrices'' (we stick to this term since we feel it is more informative for the purposes considered here). See e.g. Section 3.5 in \cite{mourrain2000}, where such matrices are considered for solving systems of polynomial equations. In \cite{mourrainSymmetric}, discrete multidimensional Hankel operators (not truncated) are studied, and Theorem 5.7 is a description of the rank of such an operator in terms of decompositions of related ideals. Combined with Theorem 7.34 of \cite{mourrainbook}, this result also implies that the symbol must be of the form \eqref{a}. (See also Section 3.2 of \cite{lasserre}, where similar results are presented.) These results can be thought of as a finite dimensional analogue (for product domains) of Theorem 1.2 and Proposition 1.4 in \cite{IEOT}. Theorem 5.9 gives another condition on certain ideals in order for the generating sequence to be of the simpler type, i.e. the counterpart of \eqref{expsepintro} instead of \eqref{exppolintro}. In Section 6 of the same paper they give conditions for when these results apply also to the truncated setting, based on rank preserving extension theorems. Similar results in the one-variable setting is found in Section 3 of \cite{EllisLay}.

Finally, we remark that the results in this paper concerning finite rank PSD Hankel operators partially overlap heuristically with results of \cite{power} and those found in Section 4 in \cite{lasserre}, where the formula \eqref{b} is found in the (non-truncated) discrete environment. In the latter reference they subsequently provide conditions under which this applies to the truncated setting.

With these remarks we end the review and begin to present the new results of this paper. For the sake of introducing useful notation, it is convenient to start with the result on tensor products, which will be used to ``lift'' the one-dimensional Carath{\'e}odory-Fejer theorem to the multidimensional discrete setting.

\section{A property of tensor products}\label{tensorsec}
Let $U_1,\ldots,U_d$ be finite dimensional linear subspaces of $\C^n$. Then $\otimes_{j=1}^d U_j$ is a linear subspace of $\otimes_{j=1}^d \C^n$, and the latter can be identified with the set of $\C$-valued functions on $\{1,\ldots,n\}^d$. Given $f\in\otimes_{j=1}^d \C^n$ and $\boldsymbol{x}\in\{1,\ldots,n\}^d$, we will write $f(\boldsymbol{x})$ for the corresponding value. For fixed $\boldsymbol{x}=(x_1,\ldots,x_{d-1})\in\{1,\ldots,n\}^{d-1}$ we define $$f_1(\boldsymbol{x})=f(\cdot ,x_1,\dots, x_d), f_2(\boldsymbol{x})=f(x_1,\cdot,x_2, \cdots, x_d),~ f_3(\boldsymbol{x})=f(x_1,x_2,\cdot, x_3, \cdots, x_d),$$ i.e. the vectors obtained from $f$ by freezing all but one variable. We refer to these vectors as ``probes'' of $f$. If $f\in \otimes_{j=1}^d U_j$ then it is easy to see that all probes $f_j$ of $f$ will be elements of $U_j$, $j=1,\ldots,d$. The following theorem states that the converse is also true.

\begin{theorem}\label{thmtensor}
If all possible probes $f_j(\bb x)$ of a given $f\in\otimes_{j=1}^d \C^n$ lie in $U_j$, then $f\in\otimes_{j=1}^d U_j$.
\end{theorem}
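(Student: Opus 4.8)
The plan is to recast both the hypothesis and the conclusion as fixed-point conditions for a family of commuting projections acting on $\otimes_{j=1}^d\C^n$, and then simply compose them. For each $j$ fix a linear projection $P_j:\C^n\to\C^n$ whose range is $U_j$ (the orthogonal projection will do), and on the big space define $\tilde P_j=I\otimes\cdots\otimes P_j\otimes\cdots\otimes I$, i.e.\ the operator that applies $P_j$ in the $j$-th tensor factor and the identity in all others. Each $\tilde P_j$ is again a projection, and since the $\tilde P_j$ act on distinct tensor slots they commute with one another.

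The two translations I would establish are the following. First, under the identification of $\otimes_{j=1}^d\C^n$ with functions on $\{1,\ldots,n\}^d$, the operator $\tilde P_j$ acts \emph{probe-wise}: for fixed frozen coordinates $\bb x$ it sends the probe $f_j(\bb x)$ to $P_j f_j(\bb x)$. Consequently $\tilde P_j f=f$ holds if and only if $P_jf_j(\bb x)=f_j(\bb x)$, i.e.\ $f_j(\bb x)\in U_j$, for every $\bb x$; so the hypothesis is exactly the statement that $\tilde P_j f=f$ for each $j=1,\ldots,d$. Second, since a tensor product of projections is a projection onto the tensor product of the ranges, $P_1\otimes\cdots\otimes P_d$ is a projection with range precisely $\otimes_{j=1}^d U_j$; hence $f\in\otimes_{j=1}^d U_j$ if and only if $(P_1\otimes\cdots\otimes P_d)f=f$, that is, $\tilde P_1\tilde P_2\cdots\tilde P_d\,f=f$.

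With these in hand the conclusion is immediate by telescoping: starting from $\tilde P_j f=f$ for all $j$,
\[
(P_1\otimes\cdots\otimes P_d)f=\tilde P_1\tilde P_2\cdots\tilde P_d\,f=\tilde P_1\cdots\tilde P_{d-1}\,f=\cdots=\tilde P_1 f=f,
\]
where each equality removes the rightmost factor using the corresponding fixed-point relation. Thus $f\in\otimes_{j=1}^d U_j$. The only genuinely technical point --- and the step I would write out carefully --- is the probe-wise action in the first translation: one must check, against the precise identification of $I\otimes\cdots\otimes P_j\otimes\cdots\otimes I$ with its action on functions on the grid, that contracting the $j$-th index against $P_j$ really amounts to applying $P_j$ to each direction-$j$ probe. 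Everything else (commutativity, that a tensor product of projections projects onto the tensor product of ranges, and the telescoping) is routine linear algebra.

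As an alternative I would note that one can instead induct on $d$, reducing to the case $d=2$. Viewing $f$ as an element of $\C^n\otimes\big(\otimes_{j\ge 2}\C^n\big)$, the direction-$1$ probes force the first-slot column space into $U_1$, while the probes in directions $2,\ldots,d$ together with the inductive hypothesis force every first-coordinate slice of $f$ into $\otimes_{j\ge 2}U_j$; the two-factor statement (all ``columns'' in $U_1$, all ``rows'' in $\otimes_{j\ge2}U_j$ imply membership in the tensor product) then closes the argument.
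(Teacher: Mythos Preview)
Your projection argument is correct and is a genuinely different route from the paper's. The paper proceeds by induction on $d$: for $d=2$ it argues by contradiction, taking $f$ in the probe-set $V$ orthogonal to $U_1\otimes U_2$, locating a nonzero probe $u_1\in U_1$, and then computing $\langle u_1\otimes u_2,f\rangle$ in two ways to produce a nonzero inner product against some $u_2\in U_2$; the general case is then reduced to $d=2$ via the identification $\otimes_{j=1}^d\C^n\cong\C^n\otimes(\otimes_{j\ge 2}\C^n)$. Your main argument, by contrast, is direct and handles all $d$ simultaneously: the key observation that $\tilde P_j$ acts probe-wise converts the hypothesis into $d$ commuting fixed-point equations, and the telescoping composition finishes without any induction or contradiction. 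This is cleaner and more transparent, and it makes the converse (that membership in $\otimes_j U_j$ forces all probes into the respective $U_j$) equally immediate from the same identities. Your alternative sketch by induction is essentially the paper's structure, though the paper's base case is argued via orthogonality rather than via the two-factor projection statement you describe.
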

\begin{proof}
First consider the case $d=2$. Let $V\subset\otimes_{j=1}^2 \C^n$ consist of all $f$ with the property stated in the theorem. This is obviously linear and $U_1\otimes U_2\subset V$. If we do not have equality, we can pick an $f$ in $V$ which is orthogonal to $U_1\otimes U_2$. At least one probe $f_{1}(\cdot)$ must be a non-zero element $u_1$ of $U_1$. Given any $u_2\in U_2$ consider \begin{equation}\label{tensor}\langle u_1\otimes u_2, f \rangle =\sum_{j=1}^n u_2(j)\langle u_1 ,f_1(j)\rangle=\langle u_2, \sum_{j=1}^n u_1(i) f_2(i)\rangle.\end{equation}
From the middle representation and the choice of $u_1$, we see that at least one value of the vector $\sum_{j=1}^n u_1(i) f_2(i)$ is non-zero. Moreover this is a linear combination of probes $f_2(i)$, and hence an element of $U_2$. But then we can pick $u_2\in U_2$ such that the scalar product \eqref{tensor} is non-zero, which is a contradiction to the choice of $f$. The theorem is thus proved in the case $d=2$.

The general case now easily follows by induction on the dimension, noting that $\otimes_{j=1}^d \C^d$ can be identified with $\C^d\otimes(\otimes_{j=2}^{d} \C^d)$ and that $\otimes_{j=1}^d U_j$ under this identification turns into $U_1\otimes(\otimes_{j=2}^{d} U_j)$.
\end{proof}

\section{General domain Toeplitz and Hankel operators and matrices}\label{sec4}
The operators in the title arise as discretizations of general domain truncated convolution/correlation operators. These become ``summing operators'', which can be represented as matrices in many ways.

\begin{figure}
\begin{minipage}[c]{0.58\textwidth}{\includegraphics[width=\textwidth]{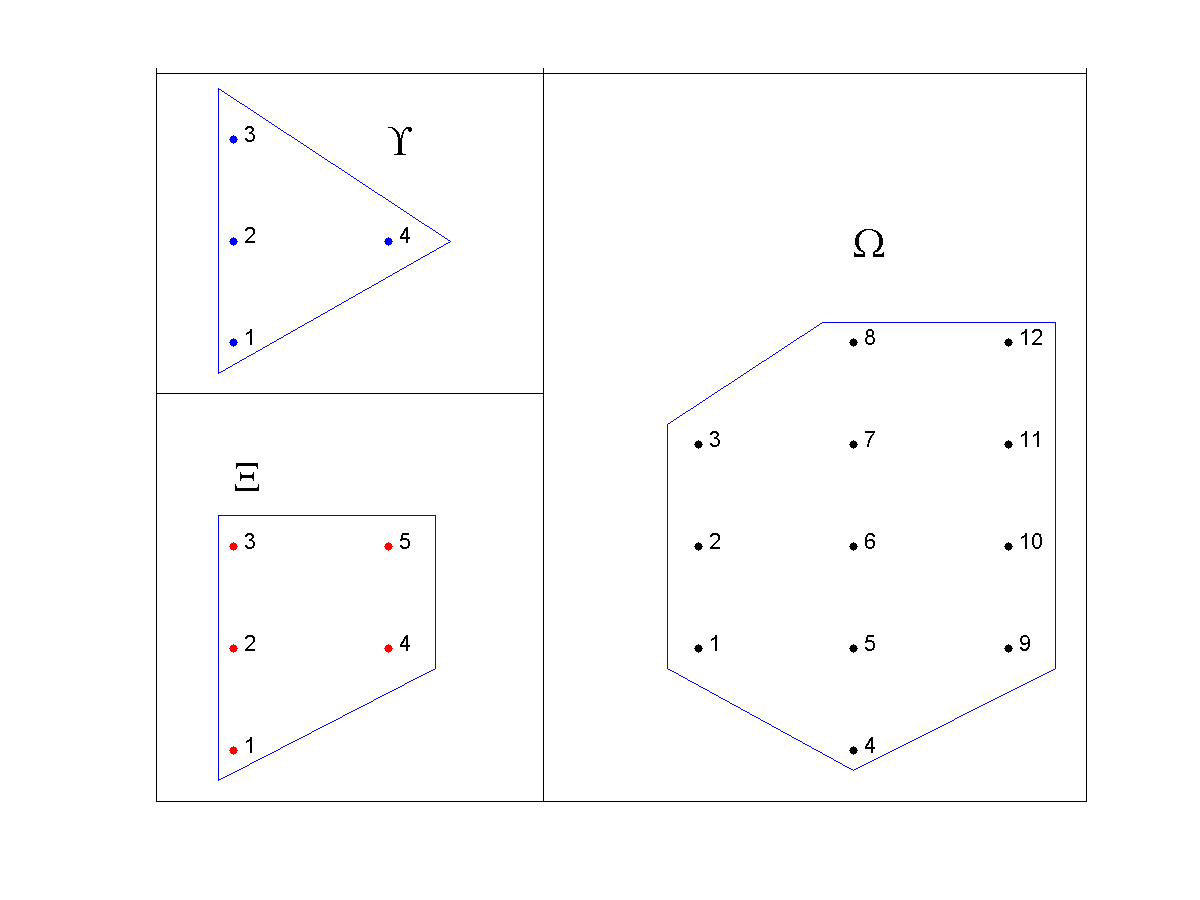}}\end{minipage}
\fbox{\begin{minipage}[c]{0.38\textwidth}
\vspace*{1.2cm}
\text{Matrix realization of $\bb {\Theta}_f$:}\[\left(
  \begin{array}{cccc}
    6 & 5 & 4 & 1 \\
    7 & 6 & 5 & 2 \\
    8 & 7 & 6 & 3 \\
    11 & 10 & 9 & 6 \\
    12 & 11 & 10 & 7 \\
  \end{array}
\right)\]
\vspace*{0.7cm}
\end{minipage}}
\caption{\label{fig:support} Left: Domains $\bb \Xi$, $\bb \Upsilon$, and $\bb\Omega=\bb \Xi-\bb \Upsilon$ with lexicographical order. Right: Illustration of where the numbered points in $\bb \Omega$ show up in the corresponding matrix realization of $\bb\Theta_f$.}
\end{figure}

\subsection{Discretization}\label{secdisc}
For simplicity of notation, we here discretize using an integer grid, since grids with other sampling lengths (these are considered in Section \ref{secdisc2}) can be obtained by first dilating the respective domains. We will throughout the paper use $\textbf{bold}$ symbols for discrete objects, and normal font for their continuous analogues. Set $$\boldsymbol{\Upsilon}=\{\boldsymbol{x}\in\Z^d:~\boldsymbol{x}\in{\Upsilon}\},$$ make analogous definition for $\Xi/\boldsymbol{\Xi}$ and define $\boldsymbol{\Omega}=\boldsymbol{\Upsilon}-\boldsymbol{\Xi}.$ We let $\boldsymbol{\Theta}_{f}$ denote what we call a ``general domain Toeplitz operator'' \begin{equation}\label{sumop}\boldsymbol{\Theta}_{f}(g)(\boldsymbol{x})=\sum_{\boldsymbol{y}\in\boldsymbol{\Upsilon}}f(\boldsymbol{x}-\boldsymbol{y})g(\boldsymbol{y}),\quad \boldsymbol{x}\in \boldsymbol{\Xi},\end{equation} where $g$ is an arbitrary function on $\boldsymbol{\Upsilon}$.
We may of course represent $g$ as a vector, by ordering the entries in some (non-unique) way. More precisely, by picking any bijection \begin{equation}\label{bij}o_y:\{1,\ldots,|\boldsymbol{\Upsilon}|\}\rightarrow\boldsymbol{\Upsilon}\end{equation}
we can identify $g$ with the vector $\tilde{g}$ given by $$(\tilde{g}_j)_{j=1}^{|\boldsymbol{\Upsilon}|}=g(o_y(j)).$$
Letting $o_x$ be an analogous bijection for $\boldsymbol{\Xi}$, it is clear that $\boldsymbol{\Theta}_{f}$ can be represented
as a matrix, where the $(i,j)$'th element is $f(o_x(i)-o_y(j))$. Such matrices will be called ``general domain Toeplitz matrices'', see Figure \ref{fig:support} for a small scale example. We make analogous definitions for $\Gamma_f$ and denote the corresponding discrete operator by $\boldsymbol{\Gamma}_f$. We refer to this as a ``general domain Hankel operator'' and its matrix realization as
``general domain Hankel matrix''. An example of such is shown in Figure \ref{fig:conv_support}.

\subsection{Block Toeplitz and Hankel matrices}\label{secblock}

If we let $\boldsymbol{\Xi}$ and $\boldsymbol{\Upsilon}$ be multi-cubes and the ordering bijections be the lexicographical order, then the matrix realization $\boldsymbol{\Theta}_f$ of \eqref{sumop} becomes a block Toeplitz matrix. These are thus special cases of the more general operators considered above. Similarly, block Hankel matrices arise when representing $\boldsymbol{\Gamma}_f$ in the same way.

For demonstration we consider $\boldsymbol{\Xi}=\boldsymbol{\Upsilon}=\{-1,0,1\}^3$ so $\boldsymbol{\Omega}=\{-2,\ldots,2\}^3$. The lexicographical order then orders $\{-1,0,1\}^3$ as $$(1,1,1),~(1,1,0),~(1,1,-1),~(1,0,1),~(1,0,0),\ldots,(-1,-1,-1).$$
The matrix-realization $T$ of a multidimensional Toeplitz operator $\boldsymbol{\Theta}_f$ then becomes

\resizebox{\linewidth}{!}{\arraycolsep=2pt%
$
T=\left(
%      \begin{array}{ccccccccc}
      \begin{array}{lllllllll}
        T_{f_3(0,0)} & T_{f_3(0,-1)} & T_{f_3(0,-2)} & T_{f_3(-1,0)} & T_{f_3(-1,-1)} & T_{f_3(-1,-2)} & T_{f_3(-2,0)} & T_{f_3(-2,-1)} & T_{f_3(-2,-2)} \\
        T_{f_3(0,1)} & T_{f_3(0,0)} & T_{f_3(0,-1)} & T_{f_3(-1,1)} & T_{f_3(-1,0)} & T_{f_3(-1,-1)} & T_{f_3(-2,1)} & T_{f_3(-2,0)} & T_{f_3(-2,-1)} \\
        T_{f_3(0,2)} & T_{f_3(0,1)} & T_{f_3(0,0)} & T_{f_3(-1,2)} & T_{f_3(-1,1)} & T_{f_3(-1,0)} & T_{f_3(-2,2)} & T_{f_3(-2,1)} & T_{f_3(-2,0)} \\
        T_{f_3(1,0)} & T_{f_3(1,-1)} & T_{f_3(1,-2)} & T_{f_3(0,0)} & T_{f_3(0,-1)} & T_{f_3(0,-2)} & T_{f_3(-1,0)} & T_{f_3(-1,-1)} & T_{f_3(-1,-2)} \\
        T_{f_3(1,1)} & T_{f_3(1,0)} & T_{f_3(1,-1)} & T_{f_3(0,1)} & T_{f_3(0,0)} & T_{f_3(0,-1)} & T_{f_3(-1,1)} & T_{f_3(-1,0)} & T_{f_3(-1,-1)} \\
        T_{f_3(1,2)} & T_{f_3(1,1)} & T_{f_3(1,0)} & T_{f_3(0,2)} & T_{f_3(0,1)} & T_{f_3(0,0)} & T_{f_3(-1,2)} & T_{f_3(-1,1)} & T_{f_3(-1,0)} \\
        T_{f_3(2,0)} & T_{f_3(2,-1)} & T_{f_3(2,-2)} & T_{f_3(1,0)} & T_{f_3(1,-1)} & T_{f_3(1,-2)} & T_{f_3(0,0)} & T_{f_3(0,-1)} & T_{f_3(0,-2)} \\
        T_{f_3(2,1)} & T_{f_3(2,0)} & T_{f_3(2,-1)} & T_{f_3(1,1)} & T_{f_3(1,0)} & T_{f_3(1,-1)} & T_{f_3(0,1)} & T_{f_3(0,0)} & T_{f_3(0,-1)} \\
        T_{f_3(2,2)} & T_{f_3(2,1)} & T_{f_3(2,0)} & T_{f_3(1,2)} & T_{f_3(1,1)} & T_{f_3(1,0)} & T_{f_3(0,2)} & T_{f_3(0,1)} & T_{f_3(0,0)} \\
      \end{array}
    \right)
$
}
where e.g. $$T_{f_3(0,0)}=\left(
                            \begin{array}{ccc}
                              f(0,0,0) & f(0,0,-1) & f(0,0,-2) \\
                              f(0,0,1) & f(0,0,0) & f(0,0,-1) \\
                              f(0,0,2) & f(0,0,1) & f(0,0,0) \\
                            \end{array}
                          \right)
$$
Note that this matrix has a Toeplitz structure on 3 levels, since each $3\times 3$-block of the large matrix above is Toeplitz, and these blocks themselves form a $3\times 3$ Toeplitz structure.

\subsection{Exponential sums}\label{expsum}
We pause the general development to note some standard facts that will be needed in what follows. Fix $N\in\N$, and for $j=1,\ldots,d$ let $\Phi_j$ be a set of at most $2N$ numbers in $\C$. Set ${\Phi}=\Phi_1\times,\ldots\times\Phi_d$.
\begin{proposition}\label{linindep}
The set $\{e^{{\zeta}\cdot\boldsymbol{x}}:~{\zeta}\in {\Phi}\}$ is linearly independent as functions on $\{-N,\ldots,N\}^d$.
\end{proposition}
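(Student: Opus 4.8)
The plan is to exploit the fact that each function in the set factors as a tensor product over the coordinates, so that the whole question collapses to the one-dimensional Vandermonde phenomenon. Writing $\zeta=(\zeta_1,\dots,\zeta_d)$ and $\boldsymbol{x}=(x_1,\dots,x_d)$, one has $e^{\zeta\cdot\boldsymbol{x}}=\prod_{m=1}^d e^{\zeta_m x_m}$, so the function attached to $\zeta\in\Phi$ is the tensor product $\bigotimes_{m=1}^d(e^{\zeta_m x_m})_{x_m\in\{-N,\dots,N\}}$ of the $d$ one-dimensional sequences. Equivalently, if for each $m$ I let $A_m$ be the matrix of size $|\Phi_m|\times(2N+1)$ whose rows are indexed by $\zeta_m\in\Phi_m$, whose columns are indexed by $x_m\in\{-N,\dots,N\}$, and whose entries are $e^{\zeta_m x_m}$, then the matrix $A$ recording the values of all the functions $e^{\zeta\cdot\boldsymbol{x}}$ is exactly the Kronecker product $A_1\otimes\cdots\otimes A_d$, with rows indexed by $\Phi=\Phi_1\times\cdots\times\Phi_d$ and columns by $\{-N,\dots,N\}^d$. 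Linear independence of the set is precisely the assertion that $A$ has full row rank $|\Phi|$.

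First I would dispose of the one-dimensional factors. Fix $m$ and set $\lambda=e^{\zeta_m}$; since $e^{\zeta_m x_m}=\lambda^{-N}\lambda^{x_m+N}$ and $x_m+N$ runs over $\{0,1,\dots,2N\}$, scaling the $\zeta_m$-th row of $A_m$ by $\lambda^{N}$ turns $A_m$ into the $|\Phi_m|\times(2N+1)$ Vandermonde matrix with nodes $\{\lambda:\zeta_m\in\Phi_m\}$ and exponents $0,\dots,2N$. Because $|\Phi_m|\le 2N<2N+1$, retaining the first $|\Phi_m|$ columns exhibits a square Vandermonde minor whose determinant is $\prod_{i<j}(\lambda_j-\lambda_i)$; this is nonzero once the nodes are distinct, so $A_m$ has full row rank $|\Phi_m|$.

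It then remains only to recombine the factors. Using the standard multiplicativity of rank under Kronecker products, $\Rank A=\Rank(A_1\otimes\cdots\otimes A_d)=\prod_{m=1}^d\Rank A_m=\prod_{m=1}^d|\Phi_m|=|\Phi|$, which equals the number of rows of $A$; hence the rows of $A$, that is the functions $e^{\zeta\cdot\boldsymbol{x}}$ with $\zeta\in\Phi$, are linearly independent. (Alternatively one can argue by the general fact that a tensor product of linearly independent families stays linearly independent, which is in the same circle of ideas as Theorem \ref{thmtensor}.)

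The one genuinely load-bearing point — and the only place where an obstacle can hide — is the distinctness of the one-dimensional nodes $\lambda=e^{\zeta_m}$. For the square Vandermonde minor to be nonzero I need distinct elements of $\Phi_m$ to produce distinct exponentials $e^{\zeta_m}$; equivalently, no two members of $\Phi_m$ may differ by an integer multiple of $2\pi \ii$. This holds automatically in the intended application, where the sets $\Phi_m$ consist of distinct frequencies confined to a single period, but it is worth recording explicitly, since without it two of the listed functions coincide on the integer grid and the stated independence genuinely fails. Everything else is routine linear algebra once the Kronecker structure is identified.
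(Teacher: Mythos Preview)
Your proof is correct and follows essentially the same route as the paper's: both reduce to the one-dimensional Vandermonde fact via the tensor (Kronecker) product structure of $e^{\zeta\cdot\boldsymbol{x}}=\prod_m e^{\zeta_m x_m}$, with your version simply making explicit the rank-multiplicativity argument that the paper compresses into the phrase ``standard tensor product theory.'' Your closing observation about the $2\pi\ii$-periodicity of the exponential---that distinct elements of $\Phi_m$ must yield distinct nodes $e^{\zeta_m}$ for the Vandermonde step to go through---is a genuine and worthwhile caveat that the paper's statement leaves implicit.
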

\begin{proof}
If $d=1$ the result is standard, see e.g. Proposition 1.1 in \cite{EllisLay} or \cite[Sec. 3.3]{brauer}. For $d>1$, the function $e^{{\zeta}\cdot\boldsymbol{x}}=e^{{\zeta}_1\bb{x}_1}\ldots e^{{\zeta}_d\bb{x}_d}$ is a tensor. The desired conclusion now follows from the $d=1$ case and standard tensor product theory.
\end{proof}

We now set $\boldsymbol{\Upsilon}=\boldsymbol{\Xi}=\{-N,\ldots,N\}^d$, and let $\boldsymbol{\Omega}=\{-2N,\ldots,2N\}^d$ in accordance with subsection \ref{secdisc}. Consider functions $f$ on $\boldsymbol{\Omega}$ given by  \begin{equation}\label{rep1}
f(\boldsymbol{x})=\sum_{k=1}^K c_k e^{{\zeta}_k\cdot \boldsymbol{x}}.
\end{equation}
We say that the representation \eqref{rep1} is reduced if all ${\zeta}_k$'s are distinct and the corresponding coefficients $c_k$ are non-zero.
\begin{proposition}\label{rankprop}
Let ${\Phi}$ be as before. Let the function $f$ on $\{-2N,\ldots,2N\}^d$ be of the reduced form \eqref{rep1} where each ${\zeta}_k$ is an element of ${\Phi}$.
 Then $$\Rank \boldsymbol{\Theta}_f=\Rank\boldsymbol{\Gamma}_f=K.$$
\end{proposition}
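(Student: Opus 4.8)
The plan is to exhibit $\boldsymbol{\Theta}_f$ as a product of three matrices whose ranks are easy to read off, and then invoke Proposition \ref{linindep} to pin the rank down exactly. First I would substitute the reduced representation \eqref{rep1} into the defining sum \eqref{sumop}. Writing $f(\boldsymbol{x}-\boldsymbol{y})=\sum_{k=1}^K c_k e^{\zeta_k\cdot\boldsymbol{x}}e^{-\zeta_k\cdot\boldsymbol{y}}$ and interchanging the two finite sums gives
\[
\boldsymbol{\Theta}_f(g)(\boldsymbol{x})=\sum_{k=1}^K c_k e^{\zeta_k\cdot\boldsymbol{x}}\Big(\sum_{\boldsymbol{y}\in\boldsymbol{\Upsilon}}e^{-\zeta_k\cdot\boldsymbol{y}}g(\boldsymbol{y})\Big).
\]
This displays $\boldsymbol{\Theta}_f$ as a composition $ADB$, where $B\colon\C^{\boldsymbol{\Upsilon}}\to\C^K$ has rows $\boldsymbol{y}\mapsto e^{-\zeta_k\cdot\boldsymbol{y}}$, where $D=\mathrm{diag}(c_1,\ldots,c_K)$, and where $A\colon\C^K\to\C^{\boldsymbol{\Xi}}$ has columns $\boldsymbol{x}\mapsto e^{\zeta_k\cdot\boldsymbol{x}}$.

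Next I would check that each of the three factors has rank $K$. The matrix $D$ is invertible precisely because the representation is reduced, so every $c_k\neq 0$. For $A$, its columns are the functions $e^{\zeta_k\cdot\boldsymbol{x}}$ on $\boldsymbol{\Xi}=\{-N,\ldots,N\}^d$; since the $\zeta_k$ are distinct elements of $\Phi$, these form a subset of the linearly independent family furnished by Proposition \ref{linindep}, so $A$ is injective. For $B$, its rows are the functions $e^{-\zeta_k\cdot\boldsymbol{y}}$ on $\boldsymbol{\Upsilon}=\{-N,\ldots,N\}^d$; here I would apply Proposition \ref{linindep} to the reflected product set $-\Phi=(-\Phi_1)\times\cdots\times(-\Phi_d)$, each factor of which is again a set of at most $2N$ complex numbers and which contains all the $-\zeta_k$. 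This gives that the rows of $B$ are linearly independent, hence $B$ is surjective onto $\C^K$.

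Combining the three observations, I would conclude
\[
\Ran\boldsymbol{\Theta}_f=\Ran(ADB)=A\big(DB(\C^{\boldsymbol{\Upsilon}})\big)=A(\C^K)=\Ran A,
\]
which has dimension $K$ because $A$ is injective; hence $\Rank\boldsymbol{\Theta}_f=K$. For $\boldsymbol{\Gamma}_f$ the same computation applies almost verbatim: expanding $f(\boldsymbol{x}+\boldsymbol{y})=\sum_k c_k e^{\zeta_k\cdot\boldsymbol{x}}e^{\zeta_k\cdot\boldsymbol{y}}$ turns the analysis factor into the rows $\boldsymbol{y}\mapsto e^{\zeta_k\cdot\boldsymbol{y}}$, now a subset of the original $\Phi$-family, and the identical rank count yields $\Rank\boldsymbol{\Gamma}_f=K$. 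Alternatively, one can simply note, as remarked in Section \ref{contcase}, that $\boldsymbol{\Gamma}_f$ and $\boldsymbol{\Theta}_f$ differ only by composition with the rank-preserving reflection $\iota$.

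There is no deep obstacle here; the content is entirely a rank factorization combined with Proposition \ref{linindep}. The one point I would be most careful about is the application of the independence result to the analysis factor $B$: one must verify that negating the frequencies keeps them inside an admissible product set of the correct size, i.e.\ that $-\Phi$ is again a product of $d$ sets each of cardinality at most $2N$. Once that is noted, the remaining steps are a routine dimension count.
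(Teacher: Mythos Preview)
Your proof is correct and takes essentially the same approach as the paper: both separate variables in the exponential to write $\boldsymbol{\Theta}_f$ as a sum of $K$ rank-one operators (equivalently, your $ADB$ factorization) and then invoke Proposition \ref{linindep} on $\boldsymbol{\Xi}=\{-N,\ldots,N\}^d$ to conclude the rank is exactly $K$. Your version is slightly more explicit in that you also verify the surjectivity of the analysis factor $B$ via Proposition \ref{linindep} applied to $-\Phi$, a point the paper's proof leaves implicit.
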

\begin{proof}
Pick a fixed ${\zeta}$ and consider $f(\bb x)=e^{{\zeta}\cdot \boldsymbol{x}}$ then $$\boldsymbol{\Theta}_{f}(g)(\boldsymbol{x})=\sum_{\boldsymbol{y}\in\boldsymbol{\Upsilon}}e^{{\zeta}\cdot\boldsymbol{x}}e^{-{\zeta}\cdot\boldsymbol{y}}g(\boldsymbol{y})=e^{\zeta\cdot\boldsymbol{x}}\langle g, \overline{e^{-{\zeta}\cdot\boldsymbol{y}}}\rangle,$$ which has rank 1. For a general $f$ of the form \eqref{rep1} the rank will thus be less than or equal to $K$. But Proposition \ref{linindep} implies that the set $\{e^{{\zeta}_k\cdot\boldsymbol{x}}\}_{k=1}^K$ is linearly independent as functions on $\boldsymbol{\Xi}$. Thus the rank will be precisely $K$, as desired.
\end{proof}

We end this section with a technical observation concerning 1 variable.

\begin{proposition}\label{prop1}
Let $f$ be a vector of length $m>n+1$ and $K<n$. Let $\zeta_1,\ldots,\zeta_K$ be fixed and suppose that each sub-vector of $f$ with length $n+1$ can be written of the form \eqref{rep1}, then $f$ can be written in this form as well.
\end{proposition}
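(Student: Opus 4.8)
The plan is to exploit the overlap between consecutive length-$(n+1)$ sub-vectors: because they share $n > K$ positions and carry the \emph{same} fixed frequencies, their coefficient vectors are forced to coincide, and the matching local pieces then glue into a single global representation. Throughout I read ``sub-vector of length $n+1$'' as a contiguous window, which is the natural reading in this Toeplitz/Hankel context and the one that makes the gluing argument run.

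Concretely, index the entries as $f_1,\ldots,f_m$ and for $1\le i\le m-n$ let $W_i=(f_i,\ldots,f_{i+n})$ be the $i$-th window. By hypothesis each $W_i$ admits a representation
$$
f_x=\sum_{k=1}^K c_k^{(i)}\, e^{\zeta_k x},\qquad x=i,i+1,\ldots,i+n,
$$
with the frequencies $\zeta_1,\ldots,\zeta_K$ fixed once and for all, but with coefficients $c_k^{(i)}$ that a priori depend on the window $i$. The whole point is to show this dependence is illusory.

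First I would compare two consecutive windows $W_i$ and $W_{i+1}$. They share the $n$ positions $x=i+1,\ldots,i+n$, and on these both expansions reproduce the same entries $f_x$, so
$$
\sum_{k=1}^K\bigl(c_k^{(i)}-c_k^{(i+1)}\bigr)e^{\zeta_k x}=0,\qquad x=i+1,\ldots,i+n.
$$
Since $K<n$, this is a vanishing exponential sum on strictly more than $K$ consecutive integer points, so by the one-variable linear independence of the $e^{\zeta_k x}$ (the $d=1$ case underlying Proposition \ref{linindep}) every coefficient must vanish, giving $c_k^{(i)}=c_k^{(i+1)}$ for all $k$. Transitivity across $i=1,\ldots,m-n-1$ then shows the coefficient vector $(c_1^{(i)},\ldots,c_K^{(i)})$ is independent of $i$; calling it $(c_1,\ldots,c_K)$ and noting that every index $x\in\{1,\ldots,m\}$ lies in at least one window, the identity $f_x=\sum_{k=1}^K c_k e^{\zeta_k x}$ holds for all $x$, which is exactly the claimed representation \eqref{rep1} of the full vector.

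The only step requiring genuine care — and the one I would flag as the main, if modest, obstacle — is the linear-independence input: one must be certain that the overlap of two consecutive windows contains \emph{strictly more} than $K$ points, so that distinct frequencies force equality of coefficients rather than merely constraining them. This is precisely guaranteed by the hypothesis $K<n$, since consecutive windows overlap in exactly $n$ positions. Implicit here is that the fixed $\zeta_k$ yield distinct values $e^{\zeta_k}$ (the reduced case); otherwise independence on an integer grid could fail through $2\pi\ii$-periodicity, and the coefficients would only be pinned down up to the resulting identification.
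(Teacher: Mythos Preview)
Your proof is correct and follows essentially the same approach as the paper: compare two adjacent length-$(n+1)$ sub-vectors on their overlap of $n>K$ positions, invoke the linear independence of the $K$ exponentials (Proposition~\ref{linindep}) to force the coefficients to agree, and then chain across all windows. The paper's own proof says precisely this in two sentences; you have simply unpacked the details and added the (apt) remark that the $\zeta_k$ must be distinct modulo $2\pi\ii$ for the integer-grid independence to hold.
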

\begin{proof}
Consider two adjacent sub-vectors with overlap of length $n$. On this overlap the representation \eqref{rep1} is unique, due to Proposition \ref{linindep}. The result now easily follows.
\end{proof}

\section{The multidimensional discrete Carath{\'e}odory-Fejer theorem}\label{sec5}
Throughout this section, let $\boldsymbol{\Upsilon},~\boldsymbol{\Xi}$ and $\boldsymbol{\Omega}$ be as in Sections \ref{secblock} and \ref{expsum}, i.e. multi-cubes centered at 0. The following theorem was first observed in \cite{stoicachinese}, but using a completely different proof.
\begin{theorem}\label{CFMD} Given $f$ on $\{-2N,\ldots,2N\}^d$, suppose that $\boldsymbol{\Theta}_f$ is PSD and has rank $K$ where $K\leq 2N$. Then $f$ can be written as \begin{equation}\label{expMD}f(\boldsymbol{x})=\sum_{k=1}^K c_k e^{i {\xi}_k\cdot \boldsymbol{x}}\end{equation} where $c_k>0$ and ${\xi}_k\in \R^d$ are distinct and unique. Conversely, if $f$ has this form then $\boldsymbol{\Theta}_f$ is PSD with rank $K$.
\end{theorem}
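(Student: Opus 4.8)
The plan is to dispose of the easy converse by a direct computation and to obtain the forward implication by lifting the one–dimensional Carath\'eodory--Fejer theorem (Theorem \ref{tr}) through the tensor product principle (Theorem \ref{thmtensor}). For the converse, suppose $f(\boldsymbol x)=\sum_{k=1}^K c_k e^{i\xi_k\cdot\boldsymbol x}$ with $c_k>0$ and the $\xi_k$ distinct. The single–frequency computation in the proof of Proposition \ref{rankprop} shows that $\boldsymbol\Theta_{e^{i\xi\cdot}}$ is the rank–one outer product $v_\xi v_\xi^*$ with $v_\xi(\boldsymbol x)=e^{i\xi\cdot\boldsymbol x}$; hence $\boldsymbol\Theta_f=\sum_{k=1}^K c_k\,v_{\xi_k}v_{\xi_k}^{*}$ is a positive combination of PSD rank–one operators and is therefore PSD, while Proposition \ref{rankprop} gives rank exactly $K$.

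For the forward direction I would write $\boldsymbol x=(x_1,\boldsymbol x')$ and regard $\boldsymbol\Theta_f$ as a block operator indexed by the transverse coordinates $\boldsymbol a',\boldsymbol b'\in\{-N,\dots,N\}^{d-1}$, the $(\boldsymbol a',\boldsymbol b')$-block being the one–dimensional Toeplitz matrix with generating sequence $f(\,\cdot\,,\boldsymbol a'-\boldsymbol b')$. A diagonal block $(\boldsymbol a'=\boldsymbol b')$ is a principal submatrix, hence a $(2N{+}1)\times(2N{+}1)$ PSD Toeplitz matrix of rank $\le K\le 2N$ with generating sequence $f(\,\cdot\,,0')$, so Theorem \ref{tr} applies and produces a finite frequency set $\Xi_1\subset\R$, $|\Xi_1|\le K$, with $f(\,\cdot\,,0')\in U_1:=\Span\{e^{i\xi\,\cdot\,}:\xi\in\Xi_1\}$. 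To reach the off–origin probes $f(\,\cdot\,,\boldsymbol c')$ I would pass to the $2\times2$ principal block determined by any pair with $\boldsymbol a'-\boldsymbol b'=\boldsymbol c'$: its PSD-ness forces the range of the off–diagonal Toeplitz block into the range of a diagonal block, namely into $\Span\{(e^{i\xi n})_{n\in\{-N,\dots,N\}}:\xi\in\Xi_1\}$. Since the columns of that off–diagonal block are exactly the length-$(2N{+}1)$ windows of the sequence $f(\,\cdot\,,\boldsymbol c')$, every such window is a unit–modulus exponential sum with frequencies in $\Xi_1$, and Proposition \ref{prop1} then forces the entire probe $f(\,\cdot\,,\boldsymbol c')$ into $U_1$. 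Carrying out the same argument in each coordinate direction shows that \emph{all} probes of $f$ in direction $j$ lie in a common space $U_j$ spanned by unit–modulus exponentials.

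With the hypotheses of Theorem \ref{thmtensor} verified, I conclude $f\in\bigotimes_{j=1}^d U_j$, so that $f(\boldsymbol x)=\sum_{\boldsymbol\xi\in\Xi_1\times\cdots\times\Xi_d}c_{\boldsymbol\xi}\,e^{i\boldsymbol\xi\cdot\boldsymbol x}$, a unit–modulus multidimensional exponential sum over a product grid. Passing to reduced form and invoking Proposition \ref{rankprop} (whose linear independence input, Proposition \ref{linindep}, is available precisely because the frequencies lie in a product set with at most $K\le 2N$ entries per factor) pins the number of surviving terms to $K$. Testing $\boldsymbol\Theta_f=\sum_k c_k v_{\xi_k}v_{\xi_k}^{*}$ against the biorthogonal dual $\{u_k\}$ of the linearly independent family $\{v_{\xi_k}\}$ isolates $c_k=\langle\boldsymbol\Theta_f u_k,u_k\rangle\ge0$, and reducedness upgrades this to $c_k>0$; uniqueness of the $\xi_k$ and $c_k$ follows from the same independence after embedding the frequencies in a product grid.

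The main obstacle is the propagation step: the principal (diagonal) blocks only expose the probe through the origin, so the real work is in transferring the unit–modulus frequency information to the off–origin probes by combining the PSD range–inclusion property of $2\times2$ blocks with the Toeplitz/shift structure and Proposition \ref{prop1}. A secondary technical point is the boundary case $K=2N$, where Proposition \ref{prop1} (stated for $K<n$) may require enlarging the windows slightly before applying it.
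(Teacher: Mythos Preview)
Your proposal is correct and follows essentially the same route as the paper: apply the one-dimensional Carath\'eodory--Fejer theorem to the diagonal block to identify the frequency set $\Xi_1$, propagate the resulting range constraint to every off-diagonal Toeplitz block, use Proposition~\ref{prop1} to pass from windows to full probes, invoke Theorem~\ref{thmtensor}, and then trim to $K$ terms via Proposition~\ref{rankprop}. The converse and the positivity/uniqueness arguments also match.

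The only genuine difference is in the propagation step. The paper writes the full PSD matrix $T$ in its eigendecomposition $T=\sum_k s_k u_k u_k^*$ and observes that each diagonal block is then $\sum_k s_k u_{k,m}u_{k,m}^*$, a sum of PSD rank-one pieces, which forces every sub-vector $u_{k,m}$ into $\mathsf{Ran}\,T_{f_d(\boldsymbol 0)}=U_1$; the off-diagonal blocks are then automatically $\sum_k s_k u_{k,m}u_{k,m'}^*$ with columns in $U_1$. You instead use the standard $2\times 2$-block fact that for PSD $\left(\begin{smallmatrix}A&B\\B^*&C\end{smallmatrix}\right)$ one has $\mathsf{Ran}\,B\subset\mathsf{Ran}\,A$. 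Both arguments encode the same PSD range-inclusion principle; yours is slightly more direct and avoids the SVD altogether, while the paper's version makes the global rank-$K$ structure visible in a single decomposition. The boundary issue you flag at $K=2N$ in Proposition~\ref{prop1} is present in the paper's proof as well and is harmless: the overlap of two adjacent windows has $2N$ points, and $K\le 2N$ distinct exponentials are linearly independent on $2N$ points by the Vandermonde argument, so the uniqueness step in that proposition still goes through.
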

\begin{proof}
First assume that $\boldsymbol{\Theta}_f$ is PSD and has rank $K$. Let $T$ be a block Toeplitz representation of $\boldsymbol{\Theta}_f$, as described in Section \ref{secblock}. Since the rank of $T$ is $K$, we can write its singular value decomposition as \begin{equation}\label{rankrep} T=\sum_{k=1}^K s_k u_{k}  v_k^T,\end{equation} where $s_k>0$, $u_k,v_k$ are column matrices and $T$ denotes transpose. Recall from Section \ref{secblock} that the Toeplitz matrix $T_{f_d(\boldsymbol{0})}$ is the \mbox{$2N+1\times 2N+1$} sub-matrix on the diagonal of $T$, (and $\bb 0\in \R^{d-1}$). This is clearly PSD so by the classical Carath{\'e}odory-Fejer theorem (Theorem \ref{tr}), $f_d(\boldsymbol{0})$ can be represented by \begin{equation}\label{repCF1d}\sum_{k=1}^K c_k e^{i\xi_k \bb x},\quad \bb x\in \{-2N\ldots 2N\}.\end{equation} We identify functions on $\{-N\ldots N\}$ with $\C^{2N+1}$ in the obvious way, and define $U_1\subset \C^{2N+1}$ by $$U_1=\Span\{e^{i\xi_1 \bb x},\ldots,e^{i\xi_K\bb x}\}.$$ The analogous subspace of $\C^{4N+1}$ will be called $U_1^{ext}$. Note that $f_d(\boldsymbol{0})\in U_1^{ext}$ by \eqref{repCF1d}. Set $\Phi_1=\{\xi_1,\ldots,\xi_K\}$.

For $1\leq m\leq (2N+1)^{d-1}$, let $u_{k,m}$ denote the sub-vector of $u_{k}$ related to the $m$th column-block, (i.e. with subindex ranging between $(m-1)(2N+1)+1$ and $m(2N+1)$), and define $v_{k,m}$ similarly. Then \eqref{rankrep} implies that
$$T_{f_d{(\boldsymbol{0})}}=\sum_{k=1}^Ks_k u_{k,m} v_{k,m}^T.$$
But this means that each $u_{k,m}$ is in $U_1$, since $f_d(\boldsymbol{0})\in U_1^{ext}$. Fix $\boldsymbol{y}\in\{-2N,\ldots,2N\}^{d-1}$. Restricting \eqref{rankrep} to a corresponding off-diagonal Toeplitz matrices in $T$ gives, with appropriate choice of $m$ and $j$, the representation $$T_{f_d{(\boldsymbol{y})}}=\sum_{k=1}^Ks_k u_{k,m} v_{k,m+j}^T.$$  But this means that the columns of $T_{f_d{(\boldsymbol{y})}}$ lie in $U_1$. We conclude that each sub-vector of ${f_d{(\boldsymbol{y})}}$ of length $2N+1$ is in $U_1$. By Proposition \ref{prop1}, we conclude that each probe ${f_d{(\boldsymbol{y})}}$ is in $U_1^{ext}$.

By choosing a different ordering and repeating the above argument, we conclude that for each $l$ ($1\leq l\leq d$), there is a corresponding subspace $U_l^{ext}$ such that all possible probes $f_l(\cdot)$ are in $U_{l}^{ext}$. Let ${\xi}_k\in\R^d$ be an enumeration of all $K^d$ multi-frequencies $\Phi_1\times\ldots\times\Phi_d$. The corresponding $K^d$ exponential functions span $\otimes_{j=1}^d U_j$. By Theorem \ref{thmtensor} we can thus write \begin{equation}\label{ggg}f(x)=\sum_{k=1}^{K^2} c_k e^{i {\xi}_k\cdot \boldsymbol{x}}.\end{equation}
However, by Proposition \ref{rankprop}, precisely $K$ of the coefficients $c_k$ are non-zero. This is \eqref{expMD}. The uniqueness of the multi-frequencies is immediate by Proposition \ref{linindep} (applied with $N:=2N$). The linear independence of these functions also give that the coefficients are unique. To see that $c_k$ is positive, ($1\leq k\leq K$), just pick a multi-sequence on $\boldsymbol{\Xi}$ which is orthogonal to all other $e^{i {\xi}_j\cdot \boldsymbol{x}}$, $j\neq k$. Using the representation \eqref{sumop} it is easy to see that \begin{equation}\label{apa}0\leq\langle \boldsymbol{\Theta}_{f}(g),g\rangle=c_k|\langle g,e^{i {\xi}_k\cdot \boldsymbol{x}}\rangle|^2,\end{equation} and the first statement is proved.

For the converse, let $f$ be of the form \eqref{expMD}. Then $\bb{\Theta}_f$ has rank $K$ by Proposition \ref{rankprop} and the PSD property follows by the fact that \begin{equation}\label{apa1}0\leq\langle \boldsymbol{\Theta}_{f}(g),g\rangle=\sum_{k=1}^Kc_k|\langle g,e^{i {\xi}_k\cdot \boldsymbol{x}}\rangle|^2,\end{equation} in analogy with \eqref{apa}.
\end{proof}

It is possible to extend this result to more general domains as considered in Section \ref{secdisc}. However, such extensions are connected with some technical conditions, which are not needed in the continuous case. Moreover, in the next section we will show that the discretizations of Section \ref{secdisc} capture the essence of their continuous counterparts, given sufficient sampling. For these reasons we satisfy with stating such extensions for the continuous case, see Section \ref{seclast}.

The above proof could also be modified to apply to block Hankel matrices, but since Fischer's theorem is connected with preconditions to rule out exceptional cases, the result is not so neat. (It does however provide alternative proofs the results in \cite{power}, i.e. concerning small Hankel operators). We here present only the cleaner continuous version, see Section \ref{secPSDGAmma}.

%We now extend this result to the more general operators $\boldsymbol{\Theta}_f$ of Section \ref{secdisc}, where $\Xi=\Upsilon$ are arbitrary open convex domains and $\boldsymbol{\Xi}=\boldsymbol{\Upsilon}$ the corresponding discrete grids (the two
%domains need to coincide in order for it to make sense to talk of PSD.) Let $N$ be the largest integer such that the grid $\bb x_0 +\{-N,N\}^d$ fits in $\boldsymbol{\Xi}$ (i.e. can be made a subset of this set upon translation), where $\bb x_0$ is any fixed point in $\bb \Xi$.
%\begin{corollary}\label{CFMDgen} Let $\boldsymbol{\Theta}_f$ be as above and suppose it has rank $K$ where $K\leq 2N$. Then $f$ can be written as \begin{equation}\label{expMDgen}f(\boldsymbol{x})=\sum_{k=1}^K c_k e^{i \boldsymbol{\xi}_k\cdot \boldsymbol{x}},\quad \boldsymbol{x}\in \boldsymbol{\Omega}\end{equation} where $c_k>0$ and $\boldsymbol{\xi}_k\in \R^d$ are distinct and unique.
%\end{corollary}
%\begin{proof}
%Clearly, the difference between $\bb\Theta_f$ on the domains $(\bb\Xi,\bb \Upsilon)$ and $(\bb\Xi-\bb x_0,\bb \Upsilon-\bb x_0)$ is just formal, and hence we may assume that $\bb x_0=\bb 0$. Let $\tilde{\bb \Theta}_f$ denote the corresponding operator on the smaller cubic grids $\tilde{\bb\Xi}=\tilde{\bb \Upsilon}=\{-N,N\}^d$. Since Theorem \ref{CFMD} applies to $\tilde{\bb \Theta}_f$, we see that $f$ has the desired form on $\tilde{\bb\Xi}-\tilde{\bb \Upsilon}$. Now let $\bb x$ be a point in $\Omega$. If no probe of $f$ has a support that intersects
%\end{proof}

\section{The multidimensional discrete Kronecker theorem}\label{multikronecker}
If we want to imitate the proof of Theorem \ref{CFMD} in Kronecker's setting, i.e. without the PSD assumption, then we have to replace \eqref{repCF1d} with \eqref{exp1d}. With suitable modifications, the whole argument goes through up until \eqref{ggg}, where now the $ \xi_k$'s can lie in $\C^d$ and $c_k$ also can be monomials. However, the key step of reducing the ($K^2$-term) representation \eqref{ggg} to the ($K$-term) representation \eqref{expMD}, via Proposition \ref{rankprop}, fails. Thus, the only conclusion we can draw is that $f$ has a representation \begin{equation}\label{rep2}
f(\boldsymbol{x})=\sum_{j=1}^J p_j(\boldsymbol{x}) e^{{\zeta}_j\cdot \boldsymbol{x}},\quad \boldsymbol{x}\in\Omega,
\end{equation}
where $J\leq K$, but we have very little information on the amount of terms in each $p_j$. This is a fundamental difference compared to before. In \cite{IEOT} examples are presented of TCO's generated by a single polynomial $p$, where $\Gamma_p$ has rank $K$ much lower than the amount of monomials needed to represent $p$. It is also not the case that these polynomials necessarily are the limit of functions of the form \eqref{rep} (in a similar way as \eqref{gg}), and hence we can not dismiss these polynomials as ``exceptional''. To obtain similar examples in the finite dimensional setting considered here, one can just discretize the corresponding TCO's found in \cite{IEOT} (as described in Section \ref{secdisc}). %In summary, the issue of relating the rank of a given block Toeplitz or Hankel matrix to the generating exponential sums is intricate.

Nevertheless, in the continuous setting (i.e. for operators of the form $\Theta_f$ and $\Gamma_f$, c.f. \eqref{TCOd} and \eqref{TCOdd}) the correspondence between rank and the structure of $f$ is resolved in \cite{IEOT}. In particular it is shown that (either of) these operators have finite rank if and only if $f$ is an exponential polynomial, and that the rank equals $K$ if $f$ is of the (reduced) form \begin{equation}\label{rep3}f=\sum_{k=1}^Kc_ke^{\zeta_k\cdot  x}.\end{equation} We now show that these results apply also in the discrete setting, given that the sampling is sufficiently dense. For simplicity of notation, we only consider the case $\Gamma_f$ from now on, but include the corresponding results for $\Theta_f$ in the main theorems.

\subsection{Discretization}\label{secdisc2}

Let bounded open domains $\Upsilon,~\Xi$ be given, and let $l>0$ be a sampling length parameter. Set $$\boldsymbol{\Upsilon}_l=\{\boldsymbol{n}l\in\Upsilon:~\boldsymbol{n}\in\Z^d\},$$ (c.f. \eqref{sumop}), make analogous definition for $\boldsymbol{\Xi}_l$ and define $\boldsymbol{\Omega}_l=\boldsymbol{\Upsilon}_l+\boldsymbol{\Xi}_l.$ We denote the cardinality of $\boldsymbol{\Upsilon}_l$ by $|\boldsymbol{\Upsilon}_l|$, and we define $\ell^2(\boldsymbol{\Upsilon}_l)$ as the Hilbert space of all functions $g$ on $\boldsymbol{\Upsilon}_l$ and norm $$\|g\|_{\ell^2}=\sum_{\boldsymbol{y}\in\boldsymbol{\Upsilon}_l}|g({\boldsymbol{y}})|^2 .$$ We let $\boldsymbol{\Gamma}_{f,l}:\ell^2(\boldsymbol{\Upsilon}_l)\rightarrow\ell^2(\boldsymbol{\Xi}_l)$ denote the summing operator $$\boldsymbol{\Gamma}_{f,l}(g)(\boldsymbol{x})=\sum_{\boldsymbol{y}\in\boldsymbol{\Upsilon}_l}f(\boldsymbol{x}+\boldsymbol{y})g(\boldsymbol{y}),\quad \boldsymbol{x}\in \boldsymbol{\Xi}_l.$$
When $l$ is understood from the context, we will usually omit it from the notation to simplify the presentation. It clearly does not matter if $f$ is defined on $\Xi+\Upsilon$ or $\bb \Xi_l+\bb \Upsilon_l$, and we use the same notation in both cases. We define $\boldsymbol{\Theta}_{f,l}$ in the obvious analogous manner. Note that in Section \ref{sec4} and \ref{sec5} we worked with $\boldsymbol{\Theta}_{f}$, which with the new notation becomes the same as $\boldsymbol{\Theta}_{f,1}$.

\begin{proposition}\label{p1}
There exists a constant $C>0$ such that $$\|\boldsymbol{\Gamma}_{f,l}\|\leq Cl^{-d/2}\|f\|_{\ell^2(\boldsymbol{\Omega}_l)}.$$
\end{proposition}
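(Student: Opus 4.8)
The plan is to bound the operator norm of $\boldsymbol{\Gamma}_{f,l}$ directly via the Cauchy--Schwarz inequality applied to the defining sum, and then to track how the factor $l^{-d/2}$ emerges from the discretization. The operator $\boldsymbol{\Gamma}_{f,l}$ is represented by a matrix whose entries are $f(\boldsymbol{x}+\boldsymbol{y})$ for $\boldsymbol{x}\in\boldsymbol{\Xi}_l$ and $\boldsymbol{y}\in\boldsymbol{\Upsilon}_l$. A clean and elementary upper bound for the operator norm of any matrix is its Hilbert--Schmidt (Frobenius) norm, so the first step is to write
\begin{equation}\label{prop1hs}
\|\boldsymbol{\Gamma}_{f,l}\|^2\leq \sum_{\boldsymbol{x}\in\boldsymbol{\Xi}_l}\sum_{\boldsymbol{y}\in\boldsymbol{\Upsilon}_l}|f(\boldsymbol{x}+\boldsymbol{y})|^2 .
\end{equation}
The right-hand side is a sum of $|f|^2$ over the set of pairs $(\boldsymbol{x},\boldsymbol{y})$, each contributing a sample value $f(\boldsymbol{x}+\boldsymbol{y})$ with $\boldsymbol{x}+\boldsymbol{y}\in\boldsymbol{\Omega}_l$.

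The key step is to compare this double sum with $\|f\|_{\ell^2(\boldsymbol{\Omega}_l)}^2=\sum_{\boldsymbol{z}\in\boldsymbol{\Omega}_l}|f(\boldsymbol{z})|^2$. For each fixed value $\boldsymbol{z}=\boldsymbol{x}+\boldsymbol{y}$, the term $|f(\boldsymbol{z})|^2$ is counted once for every way of writing $\boldsymbol{z}$ as a sum of a point in $\boldsymbol{\Xi}_l$ and a point in $\boldsymbol{\Upsilon}_l$. Hence the number of times $|f(\boldsymbol{z})|^2$ appears in \eqref{prop1hs} is bounded by the number of representations $\boldsymbol{z}=\boldsymbol{x}+\boldsymbol{y}$, which in turn is at most $\min(|\boldsymbol{\Xi}_l|,|\boldsymbol{\Upsilon}_l|)$. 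Since $\boldsymbol{\Xi}$ and $\boldsymbol{\Upsilon}$ are bounded, the grids $\boldsymbol{\Xi}_l$ and $\boldsymbol{\Upsilon}_l$ have cardinality of order $l^{-d}$; more precisely there is a constant $C_0$ (depending only on $\Xi,\Upsilon,d$) with $|\boldsymbol{\Upsilon}_l|\leq C_0 l^{-d}$ for all small $l$. Combining these observations gives
\begin{equation}\label{prop1bound}
\|\boldsymbol{\Gamma}_{f,l}\|^2\leq |\boldsymbol{\Upsilon}_l|\sum_{\boldsymbol{z}\in\boldsymbol{\Omega}_l}|f(\boldsymbol{z})|^2\leq C_0 l^{-d}\,\|f\|_{\ell^2(\boldsymbol{\Omega}_l)}^2 ,
\end{equation}
and taking square roots yields the claim with $C=\sqrt{C_0}$.

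The main obstacle, such as it is, is the bookkeeping in the multiplicity count: one must argue cleanly that each $|f(\boldsymbol{z})|^2$ enters the Hilbert--Schmidt sum at most $|\boldsymbol{\Upsilon}_l|$ times (fixing $\boldsymbol{y}$ determines $\boldsymbol{x}=\boldsymbol{z}-\boldsymbol{y}$, so the count over $\boldsymbol{y}$ dominates), and to justify the uniform estimate $|\boldsymbol{\Upsilon}_l|\leq C_0 l^{-d}$. The latter is a standard lattice-counting fact: the number of integer points $\boldsymbol{n}$ with $\boldsymbol{n}l\in\Upsilon$ is comparable to $\mathrm{vol}(\Upsilon)/l^{d}$ for bounded $\Upsilon$, and a crude upper bound suffices here since we only need an inequality with some constant $C$. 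I would also remark that the Hilbert--Schmidt bound is wasteful by a factor depending on $|\boldsymbol{\Xi}_l|$, but since the proposition only asserts the $l^{-d/2}$ scaling with an unspecified constant, this looseness is harmless and the argument above delivers exactly the stated estimate.
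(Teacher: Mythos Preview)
Your proposal is correct and follows essentially the same approach as the paper: a Cauchy--Schwarz bound followed by the lattice-point estimate $|\boldsymbol{\Upsilon}_l|\leq C_0\, l^{-d}$ (or $|\boldsymbol{\Xi}_l|\leq C_0\, l^{-d}$). The paper's version is marginally more direct---it applies Cauchy--Schwarz pointwise to $\boldsymbol{\Gamma}_{f,l}(g)(\boldsymbol{x})$ to get $\|\boldsymbol{\Gamma}_{f,l}\|\leq |\boldsymbol{\Xi}_l|^{1/2}\|f\|_{\ell^2(\boldsymbol{\Omega}_l)}$, bypassing the Hilbert--Schmidt/multiplicity bookkeeping---but the substance is identical.
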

\begin{proof}
By the Cauchy-Schwartz inequality we clearly have $$|\boldsymbol{\Gamma}_{f,l}(g)(\boldsymbol{x})|\leq\|f_{\boldsymbol{\Omega}_l}\|_{\ell^2(\boldsymbol{\Omega}_l)}\|g\|_{\ell^2(\boldsymbol{\Upsilon}_l)}$$
for each $\boldsymbol{x}\in\boldsymbol{\Xi}_l$. If we let $|\boldsymbol{\Xi}_l|$ denote the amount of elements in this set, it follows that
$$\|\boldsymbol{\Gamma}_{f,l}\|\leq\|f_{\boldsymbol{\Omega}_l}\|_{\ell^2(\boldsymbol{\Omega}_l)}|\boldsymbol{\Xi}_l|^{1/2}.$$
Since $\Xi$ is a bounded set, it is clear that $|\boldsymbol{\Xi}_l|l^d$ is bounded by some constant, and hence the result follows.
\end{proof}

\begin{theorem}\label{t1}
Let $f\in L^2(\Omega)$ be continuous. Then $$\Rank \boldsymbol{\Gamma}_{f,l}\leq \Rank\Gamma_f$$ (and $\Rank \boldsymbol{\Theta}_{f,l}\leq \Rank\Theta_f$).
\end{theorem}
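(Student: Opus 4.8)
The plan is to reduce the inequality to a factorization of the integral kernel. If $\Rank\Gamma_f=\infty$ there is nothing to prove, so assume $\Rank\Gamma_f=K<\infty$. The operator $\Gamma_f$ is the integral operator on $L^2(\Upsilon)$ with kernel $k(x,y)=f(x+y)$, and a bounded operator of rank $K$ can be written as $\Gamma_f(g)=\sum_{j=1}^K\langle g,\psi_j\rangle a_j$ with $\{a_j\}_{j=1}^K\subset L^2(\Xi)$ a basis of $\Ran\Gamma_f$ and $\{\psi_j\}_{j=1}^K\subset L^2(\Upsilon)$ a basis of $\Ran\Gamma_f^*$, both linearly independent. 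Writing $b_j=\overline{\psi_j}$ and unwinding the definition of $\Gamma_f$, this is equivalent to the degenerate-kernel identity
\begin{equation}\label{planaedecomp}
f(x+y)=\sum_{j=1}^K a_j(x)\,b_j(y)\qquad\text{for a.e. }(x,y)\in\Xi\times\Upsilon.
\end{equation}

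The difficulty is that the sampling grid $\boldsymbol{\Xi}_l\times\boldsymbol{\Upsilon}_l$ is a null set, so \eqref{planaedecomp} says nothing about it directly; this is precisely where continuity of $f$ must enter, and I expect this continuity upgrade to be the main obstacle. The second step is therefore to replace $a_j,b_j$ by continuous representatives for which the identity holds pointwise. Since $\{b_j\}$ is linearly independent, one may choose a biorthogonal system $h_1,\ldots,h_K\in L^2(\Upsilon)$ with $\int_\Upsilon b_j(y)h_i(y)\,dy=\delta_{ij}$ and set $\tilde a_i(x)=\int_\Upsilon f(x+y)h_i(y)\,dy$. Because translation is continuous in $L^2$, the map $x\mapsto f(x+\cdot)$ is continuous from $\Xi$ into $L^2(\Upsilon)$, so $\tilde a_i$ is continuous on $\Xi$; integrating \eqref{planaedecomp} against $h_i$ gives $\tilde a_i=a_i$ almost everywhere. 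Symmetrically one obtains continuous $\tilde b_j$ with $\tilde b_j=b_j$ a.e. Then $\sum_j\tilde a_j(x)\tilde b_j(y)$ is continuous on the open set $\Xi\times\Upsilon$ and equals $f(x+y)$—which is continuous by the hypothesis on $f$—almost everywhere, and two continuous functions agreeing a.e. on an open set coincide. Hence
\begin{equation}\label{planptwise}
f(x+y)=\sum_{j=1}^K \tilde a_j(x)\,\tilde b_j(y)\qquad\text{for all }(x,y)\in\Xi\times\Upsilon.
\end{equation}

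With \eqref{planptwise} the conclusion is immediate. Every point of $\boldsymbol{\Xi}_l$ and $\boldsymbol{\Upsilon}_l$ lies in the open sets $\Xi$ and $\Upsilon$, so the matrix realization of $\boldsymbol{\Gamma}_{f,l}$, whose $(\boldsymbol{x},\boldsymbol{y})$ entry is $f(\boldsymbol{x}+\boldsymbol{y})$, factors as the product of the $|\boldsymbol{\Xi}_l|\times K$ matrix $(\tilde a_j(\boldsymbol{x}))$ and the $K\times|\boldsymbol{\Upsilon}_l|$ matrix $(\tilde b_j(\boldsymbol{y}))$. A product with inner dimension $K$ has rank at most $K$, whence $\Rank\boldsymbol{\Gamma}_{f,l}\leq K=\Rank\Gamma_f$. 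The statement for $\boldsymbol{\Theta}_{f,l}$ follows by the identical argument with $x+y$ replaced by $x-y$, or by composing with the reflection $\iota$ as noted in Section \ref{contcase}. It is worth stressing that the functional-analytic route is essential for the \emph{sharp} bound: an arbitrary degenerate representation of $f(x+y)$ (such as the one obtained by expanding an exponential polynomial) may use more than $K$ terms, so one genuinely needs the minimal representation coming from a basis of $\Ran\Gamma_f$.
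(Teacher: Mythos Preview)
Your proof is correct and takes a genuinely different route from the paper's. The paper argues by \emph{compression and limit}: it maps $\ell^2(\boldsymbol{\Upsilon}_l)$ isometrically into $L^2(\Upsilon)$ via normalized indicators $e^{l,t}_{\boldsymbol y}$ of small cubes of side $t<l$ centered at the grid points, compresses $\Gamma_f$ by the resulting partial isometries $P^{l,t},Q^{l,t}$, and observes that the compressed operator is exactly $t^{d}\,\boldsymbol{\Gamma}_{\tilde f^t,l}$ where $\tilde f^t$ is a local average of $f$; this has rank at most $\Rank\Gamma_f$ trivially, and continuity of $f$ is then used only to let $t\to 0^+$ so that $\tilde f^t\to f$ pointwise on the grid and hence $\boldsymbol{\Gamma}_{\tilde f^t,l}\to\boldsymbol{\Gamma}_{f,l}$. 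You instead exploit the degenerate-kernel description of a finite-rank integral operator directly, upgrading the $L^2$ factors $a_j,b_j$ to continuous ones via biorthogonal testing, and the continuity of $f$ enters only to promote the a.e.\ identity $f(x+y)=\sum_j \tilde a_j(x)\tilde b_j(y)$ to a pointwise identity on the open set $\Xi\times\Upsilon$, which then restricts to the sampling grid. Your argument is more self-contained for this statement and avoids any limiting step; the paper's approach has the advantage that the embedding maps it introduces are essentially the same objects (in the case $t=l$) that drive the subsequent convergence Theorem~\ref{t2}, so the proof doubles as set-up. A small remark: your continuity-of-translation step actually uses only $f\in L^2(\Omega)$ (extend by zero and invoke continuity of translation in $L^2(\R^d)$), so the continuity hypothesis on $f$ is invoked exactly once, at the a.e.-to-pointwise upgrade---which is also precisely the single place the paper uses it.
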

\begin{proof}
Given ${\boldsymbol{y}}\in\boldsymbol{\Upsilon}_l$ and $t\leq l$ let $C_{\boldsymbol{y}}^{l,t}$ denote the multi-cube with center ${\boldsymbol{y}}$ and sidelength $t$, i.e. $C_{\boldsymbol{y}}^{l,t}=\{y\in \R^d:~|y-\bb y|_{\infty}<t/2\}$, where $|\cdot|_{\infty}$ denotes the supremum norm in $\R^d$. Chose $t_0$ such that $\sqrt{d}t_0/2<\dist(\boldsymbol{\Upsilon}_l,\partial\Upsilon)$. For $t<t_0$ we then have that the set $\{e^{l,t}_{\boldsymbol{y}}\}_{{\boldsymbol{y}}\in\boldsymbol{\Upsilon}_l}$ defined by $e^{l,t}_{\boldsymbol{y}}=t^{-d/2}\textbf{1}_{C_{\boldsymbol{y}}^{l,t}}$ is orthonormal in $L^2(\Upsilon)$. We make analogous definitions for $\boldsymbol{\Xi}_l$.
Clearly $\ell^2({\boldsymbol{\Upsilon}_l})$ is in bijective correspondence with $\Span\{e^{l,t}_{\boldsymbol{y}}\}_{{\boldsymbol{y}}\in\boldsymbol{\Upsilon}_l}$ via the canonical map $P^{l,t}$, i.e. $P^{l,t}(\delta_{\boldsymbol{y}})=e^{l,t}_{\boldsymbol{y}}$ where $\delta_{\boldsymbol{y}}$ is the ``Kronecker $\delta-$function''. Let $Q^{l,t}$ denote the corresponding map $Q^{l,t}:\ell^2({\boldsymbol{\Xi}_l})\rightarrow L^2(\Xi)$.

Now, clearly $\Rank {Q^{l,t}}^*\Gamma_f P^{l,t}\leq \Rank\Gamma_f$ and $$\frac{1}{t^d}\langle {Q^{l,t}}^*\Gamma_f P^{l,t}\delta_{\boldsymbol{y}},\delta_{\boldsymbol{x}}\rangle=\frac{1}{t^{2d}}\int_{|x-\bb{x}|_\infty <t/2}\int_{|y-\bb{y}|_\infty <t/2}f(x+y) ~dy~dx.$$ If we denote this number by $\tilde{f}^t({\bb{x}+\bb{y}})$, we see that $\frac{1}{t^{d}}{Q^{l,t}}^*\Gamma_f P^{l,t}=\bb{\Gamma}_{\tilde{f}^t,l}$. It follows that $\Rank \bb{\Gamma}_{\tilde{f}^t,l}\leq \Rank\Gamma_f$. Since $f$ is continuous, it is easy to see that $\lim_{t\rightarrow 0^+}\tilde{f}^t({\bb{x}+\bb{y}})=f(\bb{x}+\bb{y})$, which implies that $\lim_{t\rightarrow 0^+}\boldsymbol{\Gamma}_{\tilde{ f}^t,l}=\boldsymbol{\Gamma}_{f,l}$, and the proof is complete.
\end{proof}

\subsection{From discrete to continuous}

Our next result says that for sufficiently small $l$, the inequality in Theorem \ref{t1} is actually an equality. This needs some preparation. Given ${\boldsymbol{y}}\in\boldsymbol{\Upsilon}_l$ we now define $C_{\boldsymbol{y}}^{l}$ to be the multi-cube with center ${\boldsymbol{y}}$ and sidelength $l$. Set $\bb\Upsilon^{int}_l=\{\bb y\in\bb\Upsilon_l: ~C_{\bb y}\subset \Upsilon\}$, i.e. the set of those $\bb y$'s whose corresponding multicubes are not intersecting the boundary. Moreover, for each $\bb y \in \bb \Upsilon_l$, set
$$e^{l}_{\boldsymbol{y}}=\left\{\begin{array}{ll}
l^{-d/2}\textbf{1}_{C_{\boldsymbol{y}}^{l}}, & \textit{ if } \bb y\in\bb\Upsilon^{int}_l \\
0, & else
\end{array}\right.$$
We now define $P^{l}:\ell^2({\boldsymbol{\Upsilon}_l})\rightarrow L^2(\Upsilon)$ via $P^{l}(\delta_{\boldsymbol{y}})=e^{l}_{\boldsymbol{y}}$. Note that this map is only a partial isometry, in fact, ${P^{l}}^*{P^{l}}$ is the projection onto $\Span\{\delta_{\boldsymbol{y}}:~\bb y\in \bb\Upsilon_l^{int}\}$, and ${P^{l}}{P^{l}}^*$ is the projection in $L^2(\Upsilon)$ onto the corresponding subspace. We make analogous definitions for $\bb{\Xi}_l$, denoting the corresponding partial isometry by $Q^l$. Set $$N_l=N_l(\Upsilon)=|\bb \Upsilon_l\setminus\bb\Upsilon_l^{int}|,$$ i.e. $N_l$ is the amount of multi-cubes $C^l_{\bb{y}}$ intersecting the boundary of $\Upsilon$, and note that $N_l=\dim \Ker P^l$. Since $\Upsilon$ is bounded and open, it is easy to see that $|\bb\Upsilon_l^{int}|$ is proportional to $1/l^d$. We will say that the boundary of a bounded domain $\Upsilon$ is \emph{well-behaved} if \begin{equation}\label{defwellbehaved}\lim_{l\rightarrow0^+}l^d N_l =0.\end{equation} In other words, $\partial\Upsilon$ is well behaved if the amount of multi-cubes $C^l_{\bb{y}}$ properly contained in $\Upsilon$ asymptotically outnumbers the amount that are not.
\begin{proposition}\label{t5}
Let $\Upsilon$ be a bounded domain with Lipschitz boundary. Then $\partial\Upsilon$ is well behaved.
\end{proposition}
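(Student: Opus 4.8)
The plan is to estimate $N_l$ by a purely geometric argument and then reduce the claim to the finiteness of the Minkowski content of a Lipschitz boundary. First I would observe that every lattice point $\bb y$ counted by $N_l$ has its cube $C^l_{\bb y}$ meeting $\partial\Upsilon$: since $\bb y\in\Upsilon$ while $C^l_{\bb y}\not\subset\Upsilon$, the cube contains both a point of $\Upsilon$ and a point of its complement, and being connected it must therefore contain a point of $\partial\Upsilon$. Consequently the center satisfies $\dist(\bb y,\partial\Upsilon)\leq \sqrt d\, l/2$ (the circumradius of $C^l_{\bb y}$), and since every point of the cube lies within $\sqrt d\, l/2$ of $\bb y$, the whole cube is contained in the tube $T_l=\{x\in\R^d:~\dist(x,\partial\Upsilon)\leq\sqrt d\, l\}$.

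Next I would use that the cubes $\{C^l_{\bb y}\}$, being centered at the spacing-$l$ lattice points, tile $\R^d$ up to a set of measure zero, so they are pairwise essentially disjoint and each has volume $l^d$. As all $N_l$ of the relevant cubes lie in $T_l$, this yields
$$l^d N_l=\sum_{\bb y}\mathrm{Vol}(C^l_{\bb y})\leq \mathrm{Vol}(T_l).$$
Thus the assertion $\lim_{l\to0^+}l^dN_l=0$ reduces to showing $\mathrm{Vol}(T_l)\to0$, and in fact it suffices to obtain the bound $\mathrm{Vol}(T_l)=O(l)$.

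The main technical point, and the only place where the Lipschitz hypothesis is used, is the estimate $\mathrm{Vol}(\{x:\dist(x,\partial\Upsilon)<\epsilon\})\leq C\epsilon$ for small $\epsilon>0$. I would establish this by the standard localization for Lipschitz domains: cover the compact boundary $\partial\Upsilon$ by finitely many coordinate cylinders in each of which $\partial\Upsilon$ is the graph of a Lipschitz function $\phi:\R^{d-1}\to\R$ over a bounded base. Within one such cylinder, a point lying within $\epsilon$ of the graph is contained in a slab of thickness $O(\epsilon)$ over a base of finite $(d-1)$-dimensional measure, so its contribution to the volume is $O(\epsilon)$; summing over the finite subcover gives $\mathrm{Vol}(T_l)\leq C'\sqrt d\, l$, hence $l^dN_l\leq C'\sqrt d\, l\to0$. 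Equivalently, one may simply invoke that a compact Lipschitz hypersurface has finite upper Minkowski content, which delivers the same bound at once.

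The anticipated obstacle is entirely in the slab estimate: one must make the thickness and base-measure bounds uniform across the covering cylinders. This is routine once a finite subcover is fixed and the Lipschitz constants of the local graphs are used to control both the surface area of each patch and the normal extent of its $\epsilon$-neighborhood; no step beyond the graph representation and compactness is required.
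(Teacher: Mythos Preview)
Your argument is correct and reaches the same quantitative conclusion $l^dN_l=O(l)$ as the paper, but by a genuinely different route. The paper proceeds combinatorially: it covers $\partial\Upsilon$ by balls of radius $l$, uses the local Lipschitz graph description to bound the number of such balls by $C/l^{d-1}$, and then observes that each ball meets at most $3^d$ grid cubes $C^l_{\bb y}$, giving $N_l\leq 3^dC/l^{d-1}$ directly. You instead pack the $N_l$ essentially disjoint cubes into the tubular neighborhood $T_l$ and bound $\mathrm{Vol}(T_l)$ via the finite Minkowski content of a compact Lipschitz hypersurface. Your approach has the advantage of reducing the claim to a clean, quotable geometric fact (finite upper Minkowski content), while the paper's covering-and-counting argument is more self-contained but slightly more ad hoc. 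Both invoke the Lipschitz hypothesis at exactly the same point---the local graph representation plus compactness---so neither is materially shorter; they are simply two standard ways of saying that a Lipschitz boundary is $(d-1)$-dimensional.
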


\begin{proof}
By definition, for each point $x\in\partial\Upsilon$ one can find a local coordinate system such that $\partial\Upsilon$ locally is the graph of a Lipschitz function from some bounded domain in $\R^{d-1}$ to $\R$, see e.g. \cite{verchota} or \cite{evans}, {Sec. 4.2}. It is not hard to see that each such patch of the boundary can be covered by a collection of balls of radius $l$, where the amount of such balls is bounded by some constant times $1/l^{d-1}$. Since $\partial \Upsilon$ is compact, the same statement applies to the entire boundary. However, it is also easy to see that one ball of radius $l$ can not intersect more than $3^d$ multi-cubes of the type $C_{\bb{y}}^l$, and henceforth $N_l$ is bounded by some constant times $1/l^{d-1}$ as well. The desired statement follows immediately.
\end{proof}

We remark that all bounded convex domains have well behaved boundaries, since such domains have Lipschitz boundaries, (see e.g.~ \cite[Sec. 6.3]{evans}). Also, note that the above proof yielded a faster decay of $N_l l^d$ than necessary, so most ``natural'' domains will have well-behaved boundaries. We are now ready for the main theorem of this section:
\begin{theorem}\label{t2}
Let the boundaries of $\Upsilon$ and $\Xi$ be well behaved, and let $f$ be a continuous function on $\cl(\Omega)$. Then
\begin{equation}\label{convergence}\Gamma_f=\lim_{l\rightarrow 0^+} l^dQ^l\boldsymbol{\Gamma}_{f,l}{P^l}^*.\end{equation}
(Also $\Theta_f=\lim_{l\rightarrow 0^+} l^dQ^l\boldsymbol{\Theta}_{f,l}{P^l}^*$).
\end{theorem}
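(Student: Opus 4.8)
The plan is to prove convergence in \emph{operator norm} (which is precisely what is needed to combine with Theorem \ref{t1}, since the set of operators of rank $\le K$ is operator-norm closed). The first step is to compute the composite $l^dQ^l\bb\Gamma_{f,l}{P^l}^*$ explicitly. Tracing the three maps — ${P^l}^*$ averages $g$ over the interior cubes, $\bb\Gamma_{f,l}$ sums these averages against $f$, and $Q^l$ spreads the result back out over the interior output cubes — and using that the cubes $C^l_{\bb y}$ with $\bb y\in\bb\Upsilon^{int}_l$ are pairwise disjoint and contained in $\Upsilon$, one finds that $l^dQ^l\bb\Gamma_{f,l}{P^l}^*$ is the integral operator $L^2(\Upsilon)\to L^2(\Xi)$ with piecewise-constant kernel
\[
K_l(x,y)=\begin{cases} f(\bb x+\bb y), & x\in C^l_{\bb x},\ y\in C^l_{\bb y},\ \bb x\in\bb\Xi^{int}_l,\ \bb y\in\bb\Upsilon^{int}_l,\\[2pt] 0,&\text{otherwise,}\end{cases}
\]
where $\bb x,\bb y$ denote the centers of the (unique) cubes containing $x$ and $y$. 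Since $\Gamma_f$ is the integral operator with kernel $f(x+y)$ on $\Xi\times\Upsilon$ and the operator norm of an integral operator is dominated by the $L^2$ (Hilbert--Schmidt) norm of its kernel, it suffices to show $\|K_l-f(\cdot+\cdot)\|_{L^2(\Xi\times\Upsilon)}\to0$ as $l\to0^+$.

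Next I would split $\Xi\times\Upsilon$ into the \emph{interior} piece $\Xi_{in}\times\Upsilon_{in}$, where $\Xi_{in}=\bigcup_{\bb x\in\bb\Xi^{int}_l}C^l_{\bb x}$ and $\Upsilon_{in}$ is defined analogously, and the complementary \emph{boundary} piece. On the interior piece the integrand is $|f(\bb x+\bb y)-f(x+y)|^2$; since $|x-\bb x|_\infty<l/2$ and $|y-\bb y|_\infty<l/2$ force $|(x+y)-(\bb x+\bb y)|_\infty<l$, uniform continuity of $f$ on the compact set $\cl(\Omega)$ bounds this by $\omega_f(l)^2$, where $\omega_f$ is the modulus of continuity of $f$. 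Hence the interior contribution is at most $\omega_f(l)^2\,|\Xi|\,|\Upsilon|\to0$.

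On the boundary piece $K_l$ vanishes, so the integrand is $|f(x+y)|^2\le M^2$ with $M=\max_{\cl(\Omega)}|f|$, and the contribution is bounded by $M^2$ times the measure of $(\Xi\setminus\Xi_{in})\times\Upsilon$ together with $\Xi\times(\Upsilon\setminus\Upsilon_{in})$. Every point of $\Xi\setminus\Xi_{in}$ lies in a grid cube meeting $\partial\Xi$, so its measure is at most $l^d$ times the number of such cubes; the boundary-covering count from the proof of Proposition \ref{t5} shows this number is comparable to $N_l(\Xi)$, whence $\operatorname{meas}(\Xi\setminus\Xi_{in})=O(l^dN_l(\Xi))\to0$ by the well-behavedness hypothesis \eqref{defwellbehaved}, and similarly for $\Upsilon$. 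Adding the three estimates gives $\|K_l-f(\cdot+\cdot)\|_{L^2}\to0$, hence the claimed operator-norm convergence; the $\Theta_f$ statement follows by the identical argument with $\bb x+\bb y$ replaced by $\bb x-\bb y$. The only genuinely delicate point, and the one I would treat most carefully, is this last geometric bookkeeping: confirming that the uncovered mass near $\partial\Xi$ and $\partial\Upsilon$ is truly $O(l^dN_l)$ rather than merely vanishing; everything else reduces to uniform continuity once the kernel of the composite has been identified.
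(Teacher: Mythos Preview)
Your argument is correct and takes a genuinely different route from the paper's. The paper proceeds in two stages: first it shows $P^l(P^l)^*\to I$ and $Q^l(Q^l)^*\to I$ in the strong operator topology and invokes compactness of $\Gamma_f$ (quoted from \cite{IEOT}) to conclude $Q^l(Q^l)^*\Gamma_fP^l(P^l)^*\to\Gamma_f$ in norm; then it estimates $\|(Q^l)^*\Gamma_fP^l-l^d\bb\Gamma_{f,l}\|$ by splitting an arbitrary test function into interior and boundary parts and applying Cauchy--Schwarz pointwise, which is where the well-behavedness hypothesis enters through $l^dN_l\to0$. Your Hilbert--Schmidt approach is more elementary: once the piecewise-constant kernel $K_l$ is identified, everything reduces to a single $L^2(\Xi\times\Upsilon)$ estimate, and no external compactness result is needed.

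On the point you flag as delicate: the bound $\operatorname{meas}(\Xi\setminus\Xi_{in})=O(l^dN_l(\Xi))$ is not quite right as written, because $N_l$ counts only grid cubes with \emph{center} in $\Xi$, whereas $\Xi\setminus\Xi_{in}$ may also meet cubes centered just outside $\Xi$; moreover the proof of Proposition~\ref{t5} concerns Lipschitz boundaries, not the weaker well-behaved hypothesis assumed here. Fortunately you need much less. For any compact $K\subset\Xi$ one has $K\subset\Xi_{in}$ as soon as $l\sqrt{d}<\dist(K,\partial\Xi)$, so inner regularity of Lebesgue measure already gives $\operatorname{meas}(\Xi\setminus\Xi_{in})\to0$ with no boundary hypothesis whatsoever. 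With this observation your proof actually establishes \eqref{convergence} for arbitrary bounded open $\Xi$ and $\Upsilon$, a slight strengthening of the paper's statement (and one that makes the well-behavedness workaround in the proof of Corollary~\ref{c1} unnecessary).
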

\begin{proof}
We first establish that ${P^{l}}{P^{l}}^*$ converge to the identity operator $I$ in the \textit{SOT}-topology. Let $g\in L^2(\Upsilon)$ be arbitrary, pick any $\epsilon>0$ and let $\tilde g$ be a continuous function on $\cl(\Upsilon)$ with $\|g-\tilde{g}\|<\epsilon$. Then $$\|g-{P^{l}}{P^{l}}^*g\|\leq \|g-\tilde g\| +\|\tilde g-{P^{l}}{P^{l}}^*\tilde g\|+\|{P^{l}}{P^{l}}^*(\tilde{g}-g)\|.$$
Both the first and the last term are clearly $\leq \epsilon,$ whereas it is easy to see that the limit of the middle term as $l\rightarrow 0^+$ equals 0, since $\tilde{g}$ is continuous on $\cl(\Upsilon)$ and the boundary is well-behaved. Since $\epsilon$ was arbitrary we conclude that $\lim_{l\rightarrow 0^+}P^l{P^l}^*g=g$, as desired. The corresponding fact for $Q^l$ is of course then also true.

Now, since $\Gamma_f$ is compact by Corollary 2.4 in \cite{IEOT}, it follows by the above result and standard operator theory that $$\Gamma_f=\lim_{l\rightarrow 0^+}Q^l{Q^l}^*\Gamma_f P^l{P^l}^*,$$ and hence it suffices to show that $$0=\lim_{l\rightarrow 0^+}\|Q^l{Q^l}^*\Gamma_f P^l{P^l}^*-l^dQ^l\boldsymbol{\Gamma}_{f,l}{P^l}^*\|=\lim_{l\rightarrow 0^+}\|Q^l({Q^l}^*\Gamma_f P^l-l^d\boldsymbol{\Gamma}_{f,l}){P^l}^*\|.$$ Since $Q^l$ and ${P^l}^*$ are contractions, this follows if \begin{equation}\label{g5}\lim_{l\rightarrow 0^+}\|{Q^l}^*\Gamma_f P^l-l^d\boldsymbol{\Gamma}_{f,l}\|=0.\end{equation} By the Tietze extension theorem, we may suppose that $f$ is actually defined on $\R^n$ and has compact support there. In particular it will be equicontinuous. Now, to establish \eqref{g5}, let ${g}={g}_1+{g}_2\in\ell^2(\bb{\Upsilon}_l)$ be arbitrary, where $\supp {g}_1\subset \bb{\Upsilon}_l^{int}$ and $\supp {g}_2\subset \bb\Upsilon_l\setminus\bb{\Upsilon}_l^{int}$. By definition, $P^l {g}_2=0$ so ${Q^l}^*\Gamma_f P^l {g}_2=0$ whereas $$|l^d\boldsymbol{\Gamma}_{f,l}{g}_2(\bb{x})|\leq l^d \|f\|_{\infty}N_l(\Upsilon)^{1/2}\|{g}_2\|,$$ by the Cauchy-Schwartz inequality. Thus \begin{equation}\label{emil1}|({Q^l}^*\Gamma_f P^l-l^d\boldsymbol{\Gamma}_{f,l}){g}_2(\bb{x})|\leq l^d \|f\|_{\infty}N_l(\Upsilon)^{1/2}\|{g}_2\|.\end{equation} We now provide estimates for ${g}_1$. Given $\bb x\in\bb\Xi_l$ and $\bb y\in\bb\Upsilon_l$, set $$\tilde{f}({\bb{x}+\bb{y}})=\frac{1}{l^{2d}}\int_{|x-\bb{x}|_{\infty}<l/2}\int_{|y-\bb{y}|_{\infty}<l/2}f(x+y) ~dy~dx$$ and note that $$\tilde{f}({\bb{x}+\bb{y}})=\frac{1}{l^d}\langle {Q^{l}}^*\Gamma_f P^{l}\delta_{\boldsymbol{y}},\delta_{\boldsymbol{x}}\rangle$$ whenever $\bb x\in\bb\Xi_l^{int}$ and $\bb y\in\bb\Upsilon_l^{int}$. As in the proof of Theorem \ref{t1} it follows that
${Q^l}^*\Gamma_f P^l {g}_1(\bb x)=l^d\bb{\Gamma}_{\tilde{f},l} {g}_1(\bb x)$ for $\bb x\in\bb\Xi_l^{int}$. For such $\bb x$ we thus have \begin{equation}\label{emil2}|({Q^l}^*\Gamma_f P^l-l^d\boldsymbol{\Gamma}_{f,l}){g}_1(\bb{x})|=|l^d\bb{\Gamma}_{\tilde{f}-f,l} g_1(\bb x)|\leq l^d \|f-\tilde{f}\|_{\ell^2(\bb{\Omega}_l)}\|{g}_1\|\end{equation}
by Cauchy-Schwartz, and for $\bb x\in \bb \Xi\setminus\bb \Xi_{l}^{int}$ we get \begin{equation}\label{emil3}|({Q^l}^*\Gamma_f P^l-l^d\boldsymbol{\Gamma}_{f,l}){g}_1(\bb{x})|=|l^d\bb{\Gamma}_{f,l} g_1(\bb x)|\leq l^d \|f\|_{\infty}|\bb\Upsilon_l|^{1/2}\|{g}_1\|\end{equation} due to the definition of $Q^l$. Combining \eqref{emil1}-\eqref{emil3} we see that \begin{align*}&\|({Q^l}^*\Gamma_f P^l-l^d\boldsymbol{\Gamma}_{f,l}){g}\|\leq\|({Q^l}^*\Gamma_f P^l-l^d\boldsymbol{\Gamma}_{f,l}){g}_1\|+\|({Q^l}^*\Gamma_f P^l-l^d\boldsymbol{\Gamma}_{f,l}){g}_2\|\leq \\ \leq &|\bb{\Xi}_l^{int}|^{1/2}l^{d}\|f-\tilde{f}\|_{\bb{\Omega}_l}\|{g_1}\|+N_l(\Xi)^{1/2}l^d \|f\|_{\infty}|\bb\Upsilon_l|^{1/2}\|{g}_1\|+|\bb{\Xi}_l|^{1/2}l^d\|f\|_{\infty} N_l(\Upsilon)^{1/2}\|{g_2}\|. \end{align*}  Since $\Xi$ and $\Upsilon$ are bounded sets, $|\bb{\Xi}_l|$ and $|\bb \Upsilon_l|$ are bounded by some constant $C$ times $1/l^d$, and as $\| g_1\|\leq \| g\|$ and $\| g_2\|\leq \| g\|$, it follows that
$$\|({Q^l}^*\Gamma_f P^l-l^d\boldsymbol{\Gamma}_{f,l})\|\leq C^{1/2}l^{d/2}\|f-\tilde{f}\|_{\ell^2(\bb{\Omega}_l)}+ C^{1/2}N_l(\Xi)^{1/2}l^{d/2} \|f\|_{\infty}+ C^{1/2}l^{d/2}\|f\|_{\infty} N_l(\Upsilon)^{1/2}. $$
By Proposition \ref{t5} the last two terms go to 0 as $l$ goes to $0$. The same is true for the first term by noting that $l^{d/2}\|f-\tilde{f}\|_{\ell^2(\bb{\Omega}_l)}\leq \|f-\tilde{f}\|_{\ell^\infty(\bb{\Omega}_l)}l^{d/2}|\bb{\Omega}_l|^{1/2}$
and $$\lim_{l\rightarrow 0^+}\|f-\tilde{f}\|_{\ell^\infty(\bb{\Omega}_l)}=0,$$ which is an easy consequence of the equicontinuity of $f$. Thereby \eqref{g5} follows and the proof is complete.

\end{proof}
In particular, we have the following corollary. Note that the domains need not have well-behaved boundaries.
\begin{corollary}\label{c1} Let $\Upsilon$ and $\Xi$ be open, bounded and connected domains, and let $f$ be a continuous function on $\cl(\Omega)$. We then have
\begin{equation}\label{finiterank}\Rank\Gamma_f=\lim_{l\rightarrow 0^+}\Rank\boldsymbol{\Gamma}_{f,l}\end{equation}
(and $\Rank\Theta_f=\lim_{l\rightarrow 0^+}\Rank\boldsymbol{\Theta}_{f,l}$).
\end{corollary}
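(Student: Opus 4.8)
The plan is to sandwich $\lim_{l\to 0^+}\Rank\boldsymbol{\Gamma}_{f,l}$ between $\Rank\Gamma_f$ from both sides. Theorem \ref{t1} already supplies one half: since $\Rank\boldsymbol{\Gamma}_{f,l}\le\Rank\Gamma_f$ for every $l$, we get $\limsup_{l\to 0^+}\Rank\boldsymbol{\Gamma}_{f,l}\le\Rank\Gamma_f$. The work is therefore to prove the reverse inequality $\Rank\Gamma_f\le\liminf_{l\to 0^+}\Rank\boldsymbol{\Gamma}_{f,l}$, after which the two bounds force the limit to exist and equal $\Rank\Gamma_f$. This also covers the case $\Rank\Gamma_f=\infty$, if one reads the inequalities as statements about arbitrarily large finite lower bounds.

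The main tool for the reverse inequality is lower semicontinuity of rank under operator-norm convergence: if $A_l\to A$ in norm and $\Rank A=r<\infty$, then the $r$th singular value of $A$ is positive and perturbs continuously, so $\Rank A_l\ge r$ for small $l$. If the boundaries of $\Upsilon$ and $\Xi$ are well behaved, this applies directly to Theorem \ref{t2}: from $l^dQ^l\boldsymbol{\Gamma}_{f,l}{P^l}^*\to\Gamma_f$ we obtain $\Rank\Gamma_f\le\liminf_l\Rank\bigl(l^dQ^l\boldsymbol{\Gamma}_{f,l}{P^l}^*\bigr)\le\liminf_l\Rank\boldsymbol{\Gamma}_{f,l}$, where the final step uses that $Q^l$ and ${P^l}^*$ are contractions, so composing with them cannot increase rank.

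The obstacle, and the reason for the corollary's remark, is that the hypotheses here do not include well-behaved boundaries, so Theorem \ref{t2} is not directly available; I would remove this assumption by an interior exhaustion. Choose open sets $\Upsilon'\subset\subset\Upsilon$ and $\Xi'\subset\subset\Xi$ with smooth (hence Lipschitz, hence by Proposition \ref{t5} well behaved) boundaries, taken to increase up to $\Upsilon$ and $\Xi$. On the discrete side, since $\boldsymbol{\Upsilon}'_l\subseteq\boldsymbol{\Upsilon}_l$ and $\boldsymbol{\Xi}'_l\subseteq\boldsymbol{\Xi}_l$ on the common grid, the matrix of $\boldsymbol{\Gamma}_{f,l}^{\Upsilon',\Xi'}$ is a submatrix of that of $\boldsymbol{\Gamma}_{f,l}^{\Upsilon,\Xi}$, so $\Rank\boldsymbol{\Gamma}_{f,l}^{\Upsilon',\Xi'}\le\Rank\boldsymbol{\Gamma}_{f,l}^{\Upsilon,\Xi}$. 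Applying the already-established well-behaved case to $\Upsilon',\Xi'$ (noting $f$ is continuous on $\cl(\Xi'+\Upsilon')\subseteq\cl(\Omega)$) gives $\Rank\Gamma_f^{\Upsilon',\Xi'}=\lim_l\Rank\boldsymbol{\Gamma}_{f,l}^{\Upsilon',\Xi'}\le\liminf_l\Rank\boldsymbol{\Gamma}_{f,l}^{\Upsilon,\Xi}$.

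It remains to let $\Upsilon'\uparrow\Upsilon$ and $\Xi'\uparrow\Xi$ and see that $\Rank\Gamma_f^{\Upsilon',\Xi'}$ recovers $\Rank\Gamma_f$. Here $\Gamma_f^{\Upsilon',\Xi'}$ is exactly the compression $P_{\Xi'}\Gamma_f P_{\Upsilon'}$ by the multiplication projections onto $\Upsilon'$ and $\Xi'$; as these converge strongly to the identity and $\Gamma_f$ is compact (Corollary 2.4 in \cite{IEOT}), the compressions converge to $\Gamma_f$ in operator norm. Lower semicontinuity of rank then yields $\Rank\Gamma_f\le\sup_{\Upsilon',\Xi'}\Rank\Gamma_f^{\Upsilon',\Xi'}$, and combining with the previous estimate gives $\Rank\Gamma_f\le\liminf_l\Rank\boldsymbol{\Gamma}_{f,l}^{\Upsilon,\Xi}$, completing the argument; the $\Theta_f$ case is identical. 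The delicate point throughout is bookkeeping that passing to subdomains only decreases rank on both the discrete and continuous sides, so the well-behaved special case transfers to arbitrary domains without losing the finite-rank lower bounds.
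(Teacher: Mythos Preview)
Your argument is correct and follows the same overall skeleton as the paper's proof: bound from above via Theorem \ref{t1}, bound from below via Theorem \ref{t2} and lower semicontinuity of rank, and reduce to subdomains with well-behaved boundaries to make Theorem \ref{t2} applicable. (One cosmetic slip: the reason $\Rank(Q^l\boldsymbol{\Gamma}_{f,l}{P^l}^*)\le\Rank\boldsymbol{\Gamma}_{f,l}$ is simply that $Q^l,{P^l}^*$ are linear maps, not that they are contractions.)

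The one substantive difference is how you pass from the subdomain rank back to the full rank. The paper invokes Propositions 5.1 and 5.3 of \cite{IEOT}, which say outright that $\Rank\Gamma_f$ is \emph{independent} of the domains $\Upsilon,\Xi$; this gives $\Rank\Gamma_f^{\Upsilon',\Xi'}=\Rank\Gamma_f$ immediately for a single well-chosen connected subdomain pair, with no exhaustion needed. You instead run an exhaustion $\Upsilon'\uparrow\Upsilon$, $\Xi'\uparrow\Xi$, use compactness of $\Gamma_f$ to get norm convergence of the compressions, and then lower semicontinuity of rank. Your route is slightly longer but more self-contained, since it avoids importing the domain-independence result; the paper's route is shorter but leans on that external fact.
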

\begin{proof}
By Propositions 5.1 and 5.3 in \cite{IEOT}, the rank of $\Gamma_f$ is independent of $\Upsilon$ and $\Xi$. Combining this with Theorem \ref{t1}, it is easy to see that it suffices to verify the corollary for any open connected subsets of $\Upsilon$ and $\Xi$. We can thus assume that their boundaries are well-behaved. By Theorem \ref{t2} and standard operator theory we have
\begin{equation*}\Rank\Gamma_f\leq \liminf_{l\rightarrow 0^+} \Rank l^dQ^l\boldsymbol{\Gamma}_{f,l}{P^l}^*=\liminf_{l\rightarrow 0^+} \Rank Q^l\boldsymbol{\Gamma}_{f,l}{P^l}^*\leq \liminf_{l\rightarrow 0^+} \Rank\boldsymbol{\Gamma}_{f,l}.\end{equation*} On the other hand, Theorem \ref{t1} gives \begin{equation*} \limsup_{l\rightarrow 0^+}  \Rank\boldsymbol{\Gamma}_{f,l}\leq \Rank\Gamma_f.\end{equation*}
\end{proof}

\section{The multidimensional continuous Carath{\'e}odory-Fejer theorem}\label{seclast}

In the two final sections we investigate how the PSD-condition affects the theory. This condition only makes sense as long as $$\Xi=\Upsilon,$$ which we assume from now on. In this section we show that the natural counterpart of Carath{\'e}odory-Fejer's theorem hold for truncated convolution operators $\Theta_f$ on general domains $\Xi=\Upsilon$, and in the next we consider Fischer's theorem for truncated correlation operators.
\begin{theorem}\label{ty}
Suppose that $\Xi=\Upsilon$ is open bounded and connected, $\Omega=\Xi-\Upsilon$, and $f\in L^2(\Omega)$. Then the operator $\Theta_f$ is PSD and has finite rank $K$ if and only if there exists ${\xi}_1,\ldots,{\xi}_K\in\R^d$ and $c_1,\ldots,c_K>0$ such that
\begin{equation}\label{gggg}f=\sum_{k=1}^K c_k e^{i {\xi}_k\cdot {x}}.\end{equation}
\end{theorem}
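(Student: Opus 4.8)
The plan is to reduce the continuous statement (Theorem \ref{ty}) to its discrete counterpart (Theorem \ref{CFMD}) by using the approximation machinery developed in Section \ref{multikronecker}, together with the fact established there that the discrete operators $\boldsymbol{\Theta}_{f,l}$ converge to $\Theta_f$ and that ranks converge (Corollary \ref{c1}). The converse direction is the easy one: if $f=\sum_{k=1}^K c_k e^{i\xi_k\cdot x}$ with $c_k>0$ and $\xi_k\in\R^d$ distinct, then $\Theta_f$ has rank exactly $K$ by the continuous analogue of Proposition \ref{rankprop} (each exponential gives a rank-one operator, and linear independence of the $e^{i\xi_k\cdot x}$ on $\Xi=\Upsilon$ forces the rank to be $K$), and the PSD property follows from the identity $\langle \Theta_f g,g\rangle=\sum_{k=1}^K c_k|\langle g, \overline{e^{i\xi_k\cdot y}}\rangle|^2\geq 0$, exactly as in \eqref{apa1}.

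For the forward direction, suppose $\Theta_f$ is PSD with finite rank $K$. First I would invoke Theorem 4.4 of \cite{IEOT} to conclude that $f$ is an exponential polynomial, so that $f$ is automatically continuous and in particular the hypotheses of Corollary \ref{c1} are met. The key observation is that the discretizations inherit the PSD property: since $\Xi=\Upsilon$, the operator $\boldsymbol{\Theta}_{f,l}$ is (a summing operator whose matrix is) a general domain Toeplitz matrix, and positivity of $\Theta_f$ transfers to positivity of $\boldsymbol{\Theta}_{f,l}$. Concretely, using the partial isometry $P^l$ one sees that $\langle \boldsymbol{\Theta}_{f,l}g,g\rangle$ is, up to the factor $l^d$ and boundary corrections, equal to $\langle \Theta_f P^l g, P^l g\rangle\geq 0$; more cleanly, sampling the PSD quadratic form of $\Theta_f$ against indicator-type test functions shows each $\boldsymbol{\Theta}_{f,l}$ is PSD for all sufficiently small $l$. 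By Corollary \ref{c1}, $\Rank\boldsymbol{\Theta}_{f,l}\to\Rank\Theta_f=K$, so for small $l$ we have $\Rank\boldsymbol{\Theta}_{f,l}=K$ with $\boldsymbol{\Theta}_{f,l}$ PSD.

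At this point I would apply the discrete Carath\'eodory--Fejer theorem (Theorem \ref{CFMD}), which tells us that on the grid $\boldsymbol{\Omega}_l$ the sampled function $f$ agrees with $\sum_{k=1}^K c_k^{(l)} e^{i\xi_k^{(l)}\cdot x}$ for positive $c_k^{(l)}$ and real distinct frequencies $\xi_k^{(l)}$. The remaining work is to upgrade this discrete representation to the continuous identity \eqref{gggg}. Here I would combine two facts: $f$ is already known to be an exponential polynomial $\sum_j p_j(x)e^{\zeta_j\cdot x}$, and it agrees on finer and finer grids with a genuine $K$-term exponential sum with purely imaginary exponents and positive coefficients. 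Matching the two representations on a sufficiently rich grid (invoking the linear independence of exponentials, Proposition \ref{linindep}, to identify frequencies and coefficients) forces all polynomial factors $p_j$ to be constants, all $\zeta_j$ to be purely imaginary, and the constants to be positive, yielding exactly \eqref{gggg}.

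The main obstacle I anticipate is the final passage from the discrete to the continuous representation in a way that is uniform in $l$: a priori the discrete frequencies $\xi_k^{(l)}$ and coefficients $c_k^{(l)}$ could vary with $l$, and one must argue they stabilize (or pass to a limit) and that the limiting object is a bona fide $K$-term continuous exponential sum rather than acquiring spurious polynomial factors in the limit (as \eqref{gg} warns can happen without the PSD hypothesis). The cleanest route is probably to fix one sufficiently small $l$ for which $\Rank\boldsymbol{\Theta}_{f,l}=K$, use that $f$ is a known exponential polynomial to pin down its frequencies globally, and then read off from the discrete PSD structure that those frequencies are imaginary and the leading coefficients positive with no polynomial part; positivity of the coefficients can be extracted exactly as in \eqref{apa} by testing $\Theta_f$ against a function orthogonal to all exponentials but one.
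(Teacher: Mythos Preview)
Your overall strategy---reduce to the discrete Carath\'eodory--Fejer theorem via the approximation results of Section \ref{multikronecker}---is exactly the paper's approach, and your treatment of the converse direction and of why the discretizations inherit the PSD property is fine (the latter point the paper in fact leaves implicit). There are, however, two genuine issues in the forward direction that you need to address.

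First, Theorem \ref{CFMD} is stated and proved only for multicubes $\boldsymbol{\Xi}=\boldsymbol{\Upsilon}=\{-N,\ldots,N\}^d$; it does not apply to a general domain Toeplitz operator $\boldsymbol{\Theta}_{f,l}$ arising from an arbitrary open connected $\Xi=\Upsilon$. You cannot simply ``apply Theorem \ref{CFMD} on the grid $\boldsymbol{\Omega}_l$''. The paper resolves this by first using Theorem 4.4 of \cite{IEOT} (as you do) to see that $f$ is an exponential polynomial, hence real-analytic, and then invoking uniqueness of analytic continuation: it therefore suffices to establish \eqref{gggg} on any small cube centered at the origin contained in $\Omega$. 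The discretizations of a cube are, after dilation, multicubes, so Theorem \ref{CFMD} now applies legitimately.

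Second, the stabilization obstacle you flag is exactly the crux, and your proposed ``fix one $l$ and match against the known exponential-polynomial form'' is hazardous because of aliasing: distinct continuous exponentials can agree on a single lattice, so a discrete representation at one scale does not by itself pin down the continuous frequencies. The paper's device is to take $l=2^{-j}$ so that the grids $\boldsymbol{\Omega}_{2^{-j}}$ are nested. The representation at level $j+1$ restricts to one on the coarser grid $\boldsymbol{\Omega}_{2^{-j}}$, and since such representations are unique there (Proposition \ref{linindep}), the frequency sets at consecutive levels must coincide up to reordering. Thus the $\xi_k$ and $c_k$ are in fact independent of $j$ once $j$ is large enough, and continuity of $f$ then yields \eqref{gggg} for all $x\in\Omega$.
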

\begin{proof}
Suppose first that $\Theta_f$ is PSD and has finite rank $K$. By Theorem 4.4 in \cite{IEOT}, $f$ is an exponential polynomial (i.e. can be written as \eqref{rep2}). By uniqueness of analytic continuation, it suffices to prove the result for $\Xi=\Upsilon$ are neighborhoods of some fixed point ${x}_0$. By a translation, it is easy to see that we may assume that ${x}_0=0$. Let $l$ assume values $2^{-j}$, $j\in\N$. For $j$ large enough, (beyond $J$ say), the operator $\bb{\Gamma}_{f,2^{-j}}$ has rank $K$ (Corollary \ref{c1}) and Theorem \ref{CFMD} applies (upon dilation of the grids). We conclude that for $j>J$ the representation \eqref{gggg} holds (on $\bb\Omega_{2^{-j}}$) but the ${\xi}_k$'s may depend on $j$. However, since each grid $\bb{\Omega}_{2^{-j-1}}$ is a refinement of $\bb{\Omega}_{2^{-j}}$, Proposition \ref{linindep} guarantees that this dependence on $j$ may only affect the ordering, not the actual values of the set of $\xi_k$'s used in \eqref{gggg}. We can thus choose the order at each stage so that it does not depend on $j$. Since $f$ is an exponential polynomial, it is continuous, so taking the limit $j\rightarrow\infty$ easily yields that \eqref{gggg} holds when $\bb{x}$ is a continuous variable as well.

Conversely, suppose that $f$ is of the form \eqref{gggg}. Then $\Theta_f$ has rank $K$ by Proposition 4.1 in \cite{IEOT} (see also the remarks at the end of Section \ref{contcase}). The PSD condition follows by the continuous analogue of \eqref{apa1}.
\end{proof}

\section{PSD Truncated Correlation Operators}\label{secPSDGAmma}

\begin{theorem}\label{fisher}
Suppose that $\Xi=\Upsilon$ is open bounded and connected, $\Omega=\Xi+\Upsilon$, and $f\in L^2(\Omega)$. The operator $\Gamma_f$ is PSD and has finite rank $K$ if and only if there exists ${\xi}_1,\ldots,{\xi}_{K}\in\R^d$ and $c_1,\ldots,c_{K}>0$ such that
\begin{equation}\label{gggghytf}f=\sum_{k=1}^{K} c_k e^{{\xi}_k\cdot {x}}.\end{equation}
\end{theorem}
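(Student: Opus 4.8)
\textbf{Proof proposal for Theorem \ref{fisher}.}

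The plan is to reduce the correlation operator $\Gamma_f$ to the convolution operator $\Theta_f$ already handled in Theorem \ref{ty}, exploiting the observation made in Section \ref{contcase} that $\Gamma_f$ and $\Theta_f$ are related through the ``trivial'' operator $\iota(g)(x)=g(-x)$. Concretely, setting $\tilde{f}(x)=f(-x)$, one checks that $\Gamma_f = \Theta_{\tilde f}\circ\iota$ (or an analogous identity with $\iota$ on the other side), so that the rank of $\Gamma_f$ equals the rank of $\Theta_{\tilde f}$. The delicate point is that composition with the unitary $\iota$ does \emph{not} in general preserve positive semidefiniteness, so I cannot simply quote Theorem \ref{ty} as a black box; the PSD structure of $\Gamma_f$ must be handled directly. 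For this reason I expect the cleanest route is to mirror the proof of Theorem \ref{ty} rather than formally reduce to it.

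First I would establish the easy converse direction. If $f=\sum_{k=1}^K c_k e^{\xi_k\cdot x}$ with $c_k>0$ and $\xi_k\in\R^d$, then by Proposition 4.1 in \cite{IEOT} (the correlation analogue of the statement that pure exponentials give rank-one operators) $\Gamma_f$ has rank exactly $K$, since the $e^{\xi_k\cdot x}$ are linearly independent. For the PSD property I would compute, exactly as in \eqref{apa1}, that
\begin{equation*}\langle\Gamma_f(g),g\rangle=\sum_{k=1}^K c_k\,\Big|\langle g, e^{\xi_k\cdot y}\rangle\Big|^2\geq 0,\end{equation*}
using that the kernel $f(x+y)$ splits as a sum of products $e^{\xi_k\cdot x}e^{\xi_k\cdot y}$ and that here $\Xi=\Upsilon$ so the two factors pair with the same $g$. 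The key feature distinguishing the Fischer case from Carath\'eodory--Fejer is that the frequencies $\xi_k$ are \emph{real} (not purely imaginary), so $e^{\xi_k\cdot y}$ is real-valued and the diagonal pairing genuinely produces a sum of squares with positive weights.

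For the harder forward direction I would follow the discretization strategy of Theorem \ref{ty}. By Theorem 4.4 in \cite{IEOT}, finite rank forces $f$ to be an exponential polynomial, and by uniqueness of analytic continuation it suffices to work on shrinking neighborhoods $\Xi=\Upsilon$ of the origin. Sampling at scale $l=2^{-j}$, Corollary \ref{c1} gives that $\bb\Gamma_{f,2^{-j}}$ has rank $K$ for $j$ large, and the discretized operator is PSD whenever $\Gamma_f$ is. The main obstacle is that the clean discrete theorem available to me, Theorem \ref{CFMD}, is stated for the \emph{Toeplitz} operator $\bb\Theta_f$, not for $\bb\Gamma_f$; as the authors note, the discrete Hankel/Fischer analogue is \emph{not} as neat because PSD Hankel matrices admit exceptional cases such as \eqref{hankelcounter}. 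I would therefore need a discrete Fischer-type statement, forcing the symbol into the form $\sum c_k e^{i\xi_k\cdot\bb x}$ with the frequencies now allowed to be purely imaginary (so that $e^{i\xi_k\cdot\bb x}$ becomes a real exponential $e^{\xi_k'\cdot\bb x}$ after analytic continuation), together with a device to rule out the exceptional cases. The PSD condition together with finiteness of the rank $K\le 2N$ and the regularity of $f$ (being an exponential polynomial, hence a boundary value of a genuine exponential sum) should exclude the degenerate configurations, since those arise precisely from ``points at infinity'' that a bona fide exponential polynomial cannot produce. Having obtained the representation on each grid with $j$-independent frequency set (via Proposition \ref{linindep} and the refinement property of the dyadic grids), I would let $j\to\infty$ and use continuity of $f$ to pass to the continuous form \eqref{gggghytf}, with positivity of the $c_k$ extracted by the orthogonality argument of \eqref{apa}.
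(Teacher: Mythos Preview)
Your converse direction is correct and essentially matches the paper's argument (equation \eqref{apa19}).

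The forward direction, however, has a genuine gap. You correctly identify the obstacle --- there is no clean discrete Fischer theorem to play the role of Theorem \ref{CFMD} --- but you do not close it. Your suggestion that ``the regularity of $f$ (being an exponential polynomial) should exclude the degenerate configurations, since those arise precisely from `points at infinity'\,'' is exactly the statement that needs proof: you must show that a PSD finite-rank $\bb\Gamma_{f,l}$ with $f$ an exponential polynomial cannot carry nontrivial polynomial factors or complex exponents. That is the heart of the matter, and the discretization machinery of Section \ref{multikronecker} does not supply it. Indeed, the paper explicitly remarks that the block-Hankel analogue of Theorem \ref{CFMD} ``is connected with preconditions to rule out exceptional cases'' and is ``not so neat'', and the proof of Theorem \ref{fisher} opens with the sentence ``Surprisingly, the proof is rather different than that of Theorem \ref{ty}.''

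The paper's route avoids discretization entirely and works directly in the continuum. Starting from the exponential-polynomial representation $f=\sum_j p_j(x)e^{\zeta_j\cdot x}$ (Theorem 4.4 of \cite{IEOT}), one assumes some $p_1$ is nonconstant and derives a contradiction. The mechanism is a sequence of compressions of $\Gamma_f$ by carefully chosen bounded operators (each preserving the PSD property): first a translate-test-function construction isolates a single term, yielding $\langle \Gamma_f g(\cdot-z),g(\cdot-w)\rangle = p_1(z+w)e^{\zeta_1\cdot(z+w)}$; then an averaging operator along a direction $\nu$ reduces to a one-variable $\Gamma_h$ with $h(t)=q(t)e^{\zeta t}$; self-adjointness forces $h$ real and hence $\zeta\in\R$; conjugation by $e^{-\zeta t}$ strips the exponential; and finally a discrete second-difference operator $C_\epsilon$ applied iteratively lowers $\deg q$ to $1$ or $2$. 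At that stage an explicit $2\times 2$ or $3\times 3$ Hankel compression has determinant $-b^2$ or $-8c^3<0$, contradicting PSD. Once the polynomial factors are gone, the same self-adjointness argument forces each $\zeta_k$ real, and positivity of the $c_k$ follows from \eqref{apa19}. None of these steps appear in your outline; the determinant contradiction (or some substitute for it) is the missing idea.
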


We remark that the continuous version above differs significantly from the discrete case, even in one dimension, since the sequence $(\lambda^n)_{n=0}^{2N}$ generate a PSD Hankel matrix for all $\lambda\in\R$, whereas the base $e^{\xi_k}$ is positive in \eqref{gggghytf}.

\begin{proof}
Surprisingly, the proof is rather different than that of Theorem \ref{ty}. First suppose that $\Gamma_f$ is PSD and has finite rank $K$. Then $f$ is an exponential polynomial, i.e. has a representation \eqref{rep2}, by Theorem 4.4 in \cite{IEOT}. Suppose that there are non-constant polynomial factors in the representation \eqref{rep2}, say $p_1$. Let $N$ be the maximum degree of all polynomials $\{p_j\}_{j=1}^J$. Pick a closed subset $\tilde\Xi\subset\Xi$ and $r>0$ such that $\dist(\tilde\Xi,\R^d\setminus\Xi)>2r$. Pick a continuous real valued function $g\in L^2(\R^d)$ with support in $\tilde\Xi$ that is orthogonal to the monomial exponentials $$\{ x^{ \alpha} e^{ \zeta_j\cdot  x}\}_{| \alpha|\leq N, 1\leq j\leq J}\setminus\{e^{ \zeta_1\cdot x}\}$$
(where ${\alpha}\in \N^d$ and we use standard multi-index notation), but satisfies $\langle g,e^{ \zeta_1\cdot x}\rangle=1$, (that such a function exists is standard, see e.g. Proposition 3.1 in \cite{IEOT}). A short calculation shows that \begin{equation}\label{lok}\langle \Gamma_f g(\cdot- z),g(\cdot- w)\rangle=p_1( z+ w)e^{ \zeta_1\cdot( z+ w)}\end{equation}
whenever $| z|,| w|<r$. Since $p_1$ is non-constant, there exists a unit length $ \nu\in\R^d$ such that $q(t)=p_1(r \nu t)$ is a non-constant polynomial in $t$. Set $\zeta=r \zeta_1\cdot \nu$.
Consider the operator $A:L^2([0,1])\rightarrow L^2(\Xi)$ defined via $$A(\phi)= \int_0^1 \phi(t) g( x- {r}\nu t).$$
Clearly $A^*\Gamma_f A$ is selfadjoint. It follows by \eqref{lok} and Fubini's theorem that
\begin{align*}&\langle A^*\Gamma_f A(\phi),\psi \rangle=\int_0^1\int_0^1 p_1(r\nu t+r \nu s)e^{\zeta_1\cdot(r\nu t+r \nu s)}\phi(t)dt\overline{\psi(s)}ds=\\&\int_0^1\int_0^1 q(t+s) e^{\zeta(t+s)}\phi(t)dt\overline{\psi(s)}ds.\end{align*}
With $h(t)=q(t) e^{\zeta t}$, it follows that the operator $\Gamma_h:L^2([0,1])\rightarrow L^2([0,1])$ is self-adjoint.
Either by repeating arguments form Section \ref{multikronecker}, or by standard results from integral operator theory, it is easy to see that $h(t+s)=\overline{h(s+t)}$, i.e. $h$ is real valued. This clearly implies that $\zeta\in\R$. Now consider the operator $B:L^2([0,1])\rightarrow L^2([0,1])$ defined by $B(g)(t)=e^{-\zeta t}g(t)$. As before we see that $B^*\Gamma_hB=\Gamma_q$ is PSD. Given $0<\epsilon<1/2$, define $C_{\epsilon}:L^2([0,1/2])\rightarrow L^2([0,1])$ by $C_{\epsilon}(g)(t)=\frac{g(t-\epsilon)-g(t)}{\epsilon}$, (where we identify functions on $[0,1/2]$ with functions on $\R$ that are identically zero outside the interval). It is easy to see that $$C_{\epsilon}^* \Gamma_q\C_{\epsilon}=\Gamma_{{\epsilon^{-2}}{(q(\cdot+2\epsilon)-2q(\cdot+\epsilon)+q(\cdot))}},$$
which means that also this truncated correlation operator is PSD. Since $q$ is a polynomial, it is easy to see that
$(q(\cdot+2\epsilon)-2q(\cdot+\epsilon)+q(\cdot))/\epsilon^2$ converges uniformly on compacts to $q''$. By simple estimates based on the Cauchy-Schwartz inequality (see e.g. Proposition 2.1 in \cite{IEOT}), it then follows that the corresponding sequence of operators converges to $\Gamma_{q''}$ (acting on $L^2([0,1/2])$), which therefore is PSD. Continuing in this way, we see that we can assume that $q$ is of degree 1 or 2, where $\Gamma_q$ acts on an interval $[0,3l]$ where $3l$ is a power of $1/2$. We first assume that the degree is 2, and parameterize $q(t)=a+b(t/l)+c(t/l)^2$. Performing the differentiation trick once more, we see that $\Gamma_c$ is PSD on some smaller interval, which clearly means that $c>0$. Now pick $g\in L^2([0,l])$ such that $\langle g,1\rangle=1$, $\langle g,t\rangle=0$, $\langle g,t^2\rangle=0$, and consider $D:\C^3\rightarrow L^2([0,3l])$ defined by $$D((c_0,c_1,c_2))=c_0 g(\cdot)+c_1 g(\cdot-l)+c_2 g(\cdot-2l).$$ By \eqref{lok}, the matrix representation of $D^*\Gamma_q D$ is $$M=\left(
                                        \begin{array}{ccc}
                                          q(0) & q(l) & q(2l) \\
                                          q(l) & q(2l) & q(3l) \\
                                          q(2l) & q(3l) & q(4l) \\
                                        \end{array}
                                      \right)=\left(
                                        \begin{array}{ccc}
                                          a & a+b+c & a+2b+4c \\
                                          a+b+c & a+2b+4c & a+3b+9c \\
                                          a+2b+4c & a+3b+9c & a+4b+16c \\
                                        \end{array}
                                      \right),
$$
which then is PSD. However, a (not so) short calculation shows that the determinant of $M$ equals $-8c^3$ which is a contradiction, since it is less than 0 (recall that $c>0$). We now consider the case of degree 1, i.e. $c=0$ and $b\neq 0$. As above we deduce that the matrix
$$M=\left(
                                        \begin{array}{ccc}
                                          q(0) & q(l) \\
                                          q(l) & q(2l) \\
                                        \end{array}
                                      \right)=\left(
                                        \begin{array}{ccc}
                                          a & a+b+c \\
                                          a+b+c & a+2b+4c \\
                                        \end{array}
                                      \right),
$$
has to be PSD, which contradicts the fact that its determinant is $-b^2$.

By this we finally conclude that there can be no polynomial factors in the representation \eqref{rep2}. By the continuous version of Proposition \ref{rankprop} (see Proposition 4.1 in \cite{IEOT}), we conclude that $f$ is of the form \eqref{rep3}, i.e. $f=\sum_{k=1}^Kc_ke^{\zeta_k\cdot x}$. From here the proof is easy. Repeating the first steps, we conclude that $\zeta_k\cdot\nu\in\R$ for all $ \nu\in\R^d$, by which we conclude that $\zeta_k$ are real valued. We therefore call them $ \xi_k$ henceforth. With this at hand we obviously have
\begin{equation}\label{apa19}\langle \boldsymbol{\Gamma}_{f}(g),g\rangle=\sum_{k=1}^Kc_k|\langle g,e^{ {\xi}_k\cdot {x}}\rangle|^2\end{equation} for all $g\in L^2(\Xi)$,
whereby we conclude that $c_k>0$.

For the converse part of the statement, let $f$ be of the form \eqref{gggghytf}. That $\Gamma_f$ has rank $K$ has already been argued (Proposition 4.1 in \cite{IEOT}) and that $\Gamma_f$ is PSD follows by \eqref{apa19}. The proof is complete.
\end{proof}

\section{Unbounded domains}\label{secUB}

For completeness, we formulate the results form the previous two sections for unbounded domains. $\Gamma_f$ is defined precisely as before, i.e. via the formula \eqref{TCOdd}, except that we now have to assume that $f(x+\cdot)$ is in $L^2(\Upsilon)$ for every $x\in\Xi$ and vice versa, (see definition 1.1 in \cite{IEOT}). Obviously, analogous definitions/restrictions apply to $\Theta_f$ as well. The main difficulty with unbounded domains is that exponential polynomials then can give rise to unbounded operators. Following \cite{IEOT}, we address this by assuming that $\Omega$ is convex and we let $\Delta_{\Omega}$\footnote{It was called $\Theta_{\Omega}$ in \cite{IEOT}, see Section 9.} denote the set of directions ${\vartheta}\in\R^d$ such that the orthogonal projection of $\Omega$ on the half line $[0,\infty)\cdot{\vartheta}$ is a bounded set, and we let $\interior(\Delta_{\Omega})$ denote its interior.

\begin{theorem}\label{t6}
Let $\Xi=\Upsilon\subset\R^d$ be convex domains and set $\Omega=\Xi+\Upsilon$. Then $\Gamma_f$ is bounded, PSD and has finite rank if and only if $f$ is of the form \eqref{gggghytf} and ${\xi}_k\in \interior(\Delta_{\Omega})$ for all $k.$
\end{theorem}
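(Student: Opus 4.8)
The plan is to combine the characterization of bounded exponential sums on unbounded convex domains (already established in Section 9 of \cite{IEOT}) with the PSD structure theorem for bounded domains, namely Theorem \ref{fisher}. The essential observation is that the finite-rank PSD property is a \emph{local} condition on $f$ in the sense that it is inherited by restriction to smaller subdomains, whereas boundedness of the operator is a \emph{global} condition tied to the behavior of the exponents relative to $\interior(\Delta_\Omega)$. The strategy is therefore to decouple these two issues: first extract the functional form of $f$ using the bounded theory, then separately pin down where the frequencies must lie using the boundedness assumption.

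First I would prove the forward direction. Assume $\Gamma_f$ is bounded, PSD and has finite rank $K$. Pick any open bounded convex subdomain $\Xi'=\Upsilon'\subset\Xi$ with $\Omega'=\Xi'+\Upsilon'$. Compressing $\Gamma_f$ to $L^2(\Xi')$ (i.e.\ composing with the inclusion $L^2(\Upsilon')\hookrightarrow L^2(\Upsilon)$ and its adjoint) yields precisely the truncated correlation operator $\Gamma_{f|_{\Omega'}}$ on the bounded domain $\Xi'=\Upsilon'$. This compression is again PSD (compressions of PSD operators are PSD) and has rank at most $K$, hence finite rank. Theorem \ref{fisher} then forces $f|_{\Omega'}=\sum_{k} c_k e^{\xi_k\cdot x}$ with $c_k>0$ and $\xi_k\in\R^d$. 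By uniqueness of the $\xi_k$'s (Proposition \ref{linindep}) the representation on overlapping subdomains must agree, so by an exhaustion argument we obtain a \emph{single} global representation $f=\sum_{k=1}^{K'} c_k e^{\xi_k\cdot x}$ with $c_k>0$, valid on all of $\Omega$. It remains to show $\xi_k\in\interior(\Delta_\Omega)$ and that $K'=K$. The frequency constraint is exactly where the boundedness hypothesis enters: by the results of Section 9 in \cite{IEOT}, a reduced exponential sum $\sum_k c_k e^{\xi_k\cdot x}$ generates a bounded operator $\Gamma_f$ on the convex unbounded domain if and only if every $\xi_k$ lies in $\interior(\Delta_\Omega)$. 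Since our $f$ is of this reduced form and $\Gamma_f$ is assumed bounded, we conclude $\xi_k\in\interior(\Delta_\Omega)$, and the rank count $K'=K$ follows from the boundedness-plus-linear-independence argument there.

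For the converse, suppose $f=\sum_{k=1}^K c_k e^{\xi_k\cdot x}$ with $c_k>0$ and each $\xi_k\in\interior(\Delta_\Omega)$. Boundedness and $\Rank\Gamma_f=K$ follow directly from the corresponding result in Section 9 of \cite{IEOT} (the unbounded analogue of Proposition 4.1), since the $\xi_k\in\interior(\Delta_\Omega)$ condition is exactly what guarantees $e^{\xi_k\cdot(x+y)}\in L^2$ and the associated rank-one operators are bounded. The PSD property is then immediate from the pointwise identity $\langle \Gamma_f g,g\rangle=\sum_{k=1}^K c_k|\langle g,e^{\xi_k\cdot x}\rangle|^2\geq 0$, the unbounded-domain analogue of \eqref{apa19}, which is valid for all $g$ in the (dense) domain of $\Gamma_f$; here the inner products are finite precisely because $\xi_k\in\interior(\Delta_\Omega)$ ensures $e^{\xi_k\cdot x}$ pairs against such $g$.

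The main obstacle I anticipate is \emph{not} the structural form of $f$, which transfers cleanly from Theorem \ref{fisher} by localization, but rather the careful handling of the frequency-location condition $\xi_k\in\interior(\Delta_\Omega)$ and the matching of ranks across the bounded/unbounded divide. Specifically, one must verify that the compression-to-subdomain step genuinely produces a finite-rank \emph{and PSD} operator and that the resulting local exponents are globally consistent, and then invoke the delicate boundedness characterization from Section 9 of \cite{IEOT} in both directions. The convexity of $\Omega$ is used crucially to make $\Delta_\Omega$ and its interior well-defined and to ensure the exhaustion by convex subdomains behaves well; some care is needed to confirm that the finite-rank quantity is preserved in the limit, for which the rank-independence of $\Gamma_f$ on the choice of $\Xi,\Upsilon$ (Propositions 5.1 and 5.3 in \cite{IEOT}) should again be the key tool.
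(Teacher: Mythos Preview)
Your proposal is correct and follows essentially the same route as the paper: restrict to a bounded subdomain, apply Theorem \ref{fisher} to obtain the local exponential form, extend globally, and then invoke the boundedness characterization from Section 9 of \cite{IEOT} to force $\xi_k\in\interior(\Delta_\Omega)$. The only notable difference is the global extension step: the paper passes from the local representation to the global one in a single stroke via uniqueness of real analytic continuation (since $\Omega$ is convex, hence connected, and an exponential sum is real analytic), whereas you use an overlapping-patches exhaustion argument with uniqueness of the exponents. Both are valid; the analytic-continuation route is slightly cleaner since one restriction suffices, and your citation of Proposition \ref{linindep} (which is stated for discrete grids) should strictly be replaced by the continuous linear-independence of exponentials, but this is a cosmetic point.
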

\begin{proof}
This follows by straightforward modifications of the proofs in Section 9 of \cite{IEOT}, so we satisfy with outlining the details. The ``if'' direction is easy so we focus on the ``only if''. We restrict the operator $\Gamma_f$ to functions living on a subset (see Theorem 9.1 \cite{IEOT}) to obtain a new operator to which Theorem \ref{fisher} above applies. From this we deduce that $f$ locally has the form \eqref{gggghytf}. That this formula then holds globally is an immediate consequence of uniqueness of real analytic continuation, combined with the observation that $\Omega$ is connected. Finally, the restriction on the $\xi_k$'s is immediate by Theorem 9.3 in \cite{IEOT}.
\end{proof}

The corresponding situation for truncated convolution operators is quite different. We first note that $\Theta_f:L^2(\Upsilon)\rightarrow L^2(\Xi)$ is bounded if and only if $\Gamma_f:L^2(-\Upsilon)\rightarrow L^2(\Xi)$ is bounded, as mentioned in Section \ref{contcase} and further elaborated on around formula (1.2) in \cite{IEOT}. With this, we immediately obtain the following theorem, which was left implicit in \cite{IEOT}.

\begin{theorem}\label{t7}
Let $\Xi,\Upsilon\subset\R^d$ be convex domains and set $\Omega=\Xi-\Upsilon$. Then $\Theta_f$ is bounded and has finite rank if and only if $f$ is an exponential polynomial (i.e. $
f({x})=\sum_{j=1}^J p_j({x}) e^{{\zeta}_j\cdot {x}}$) and $\re{\zeta}_j\in \interior(\Delta_{\Omega})$ for all $j.$
\end{theorem}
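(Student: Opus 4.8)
The plan is to transfer the entire problem from convolution to correlation operators by a reflection, and then invoke the Kronecker-type machinery for $\Gamma_f$ on unbounded convex domains developed in Section 9 of \cite{IEOT}. Let $R:L^2(\Upsilon)\to L^2(-\Upsilon)$ be the unitary given by $(Rg)(y)=g(-y)$. A change of variables $y\mapsto -y$ in \eqref{TCOd} shows that $\Theta_f=\Gamma_f\circ R$, where on the right $\Gamma_f:L^2(-\Upsilon)\to L^2(\Xi)$ carries the \emph{same} symbol $f$. The correlation domain of this $\Gamma_f$ is $\Xi+(-\Upsilon)=\Xi-\Upsilon=\Omega$, so the set $\Delta_\Omega$, and hence $\interior(\Delta_\Omega)$, is literally the one appearing in the statement. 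Since $R$ is unitary, $\Theta_f$ is bounded, respectively of rank $K$, exactly when this $\Gamma_f$ is; the boundedness half of this equivalence is precisely what is recorded immediately before the theorem.

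Granting the correspondence, the ``if'' direction is the easy one. If $f$ is an exponential polynomial with $\re\zeta_j\in\interior(\Delta_\Omega)$ for every $j$, then the condition on the real parts $\re\zeta_j$ provides the $L^2$-integrability that makes $\Gamma_f$ bounded (Theorem 9.3 in \cite{IEOT}), while the exponential-polynomial structure makes it finite rank (\cite{IEOT}). Hence $\Gamma_f$ is bounded and finite rank, and therefore so is $\Theta_f=\Gamma_f R$.

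For the ``only if'' direction, suppose $\Theta_f$ is bounded with $\Rank\Theta_f=K<\infty$; then $\Gamma_f$ on $(-\Upsilon,\Xi)$ is bounded and of rank $K$ as well. Exactly as in the proof of Theorem \ref{t6}, I would restrict $\Gamma_f$ to functions supported in a small neighborhood of an interior point (Theorem 9.1 in \cite{IEOT}) to obtain a truncated correlation operator on a \emph{bounded} domain, to which the finite-rank theory of \cite{IEOT} applies; Theorem 4.4 there forces $f$ to coincide locally with an exponential polynomial. Uniqueness of real-analytic continuation, together with the connectedness of $\Omega$, upgrades this to a global representation $f=\sum_{j=1}^J p_j(x)e^{\zeta_j\cdot x}$. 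Finally, boundedness of $\Gamma_f$ constrains the exponents: by Theorem 9.3 in \cite{IEOT} each surviving term must satisfy $\re\zeta_j\in\interior(\Delta_\Omega)$.

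I expect the genuine difficulty to lie not in the finite-rank bookkeeping but in the exponent condition. The delicate point is the translation of the analytic statement ``$\Gamma_f$ (equivalently $\Theta_f$) is bounded'' into the clean geometric requirement $\re\zeta_j\in\interior(\Delta_\Omega)$: one must verify that $e^{\re\zeta_j\cdot x}\in L^2(\Xi)$ together with $e^{\re\zeta_j\cdot y}\in L^2(\Upsilon)$ corresponds to boundedness of the orthogonal projection of $\Omega$ in the relevant directions, and that the polynomial prefactors $p_j$ do not destroy integrability --- which is exactly why the open interior, and not the closure, of $\Delta_\Omega$ appears. All of this is already carried out in Section 9 of \cite{IEOT}; the only genuinely new ingredient is the reflection identity $\Theta_f=\Gamma_f R$, which transports those results verbatim from the correlation setting to the convolution setting.
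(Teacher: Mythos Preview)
Your proposal is correct and follows essentially the same route as the paper: both arguments reduce $\Theta_f$ to the truncated correlation operator $\Gamma_f:L^2(-\Upsilon)\to L^2(\Xi)$ via the reflection $R$, and then invoke the Section~9 machinery of \cite{IEOT} (Theorems~9.1 and~9.3 together with Theorem~4.4) to characterize the bounded finite-rank case. The paper in fact says this ``was left implicit in \cite{IEOT}'' and gives no further details, so your write-up is simply a more explicit version of the same argument.
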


However, if now again we let $\Xi=\Upsilon$ and we additionally impose PSD, the proof of Theorem \ref{t6} combined with Theorem \ref{ty} shows that $ \zeta_j=i\xi_j$ for some $\xi\in\R^d$. However, Theorem \ref{t7} then forces $0=\re \zeta_j\in \interior (\Delta_{\Omega})$, which can only happen if $\Delta_{\Omega}=\R^d$, since it is a cone. This in turn is equivalent to $\Omega$ being bounded, so we conclude that

\begin{theorem}\label{t8}
Let $\Xi=\Upsilon\subset\R^d$ be convex unbounded domains and set $\Omega=\Xi-\Upsilon$. Then $\Theta_f$ is bounded and PSD if and only if $f\equiv 0$.
\end{theorem}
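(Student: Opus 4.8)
The ``if'' direction is trivial, as $\Theta_0=0$ is bounded and PSD, so the content lies in the forward implication. The plan is to combine the two structure results already established for $\Theta_f$ (the bounded-domain Carath\'eodory--Fejer theorem, Theorem \ref{ty}, and the boundedness criterion, Theorem \ref{t7}) and then play the resulting frequency constraint against the cone geometry of $\Delta_\Omega$, in close analogy with the proof of Theorem \ref{t6}.

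First I would localize. Fix $x_0\in\Xi$ and pick bounded open convex neighbourhoods $\tilde\Xi=\tilde\Upsilon\subset\Xi$ of $x_0$. Restricting $\Theta_f$ to functions supported on these sets gives an operator that is again PSD (the form is simply tested on a subspace) and whose rank does not exceed that of $\Theta_f$, so Theorem \ref{ty} applies on $\tilde\Omega=\tilde\Xi-\tilde\Upsilon$. It yields that, near $x_0$, $f=\sum_k c_k e^{i\xi_k\cdot x}$ with $c_k>0$ and $\xi_k\in\R^d$; in particular the exponential-polynomial representation \eqref{rep2} of $f$ carries no polynomial factors and all of its frequencies are purely imaginary, $\zeta_j=i\xi_j$. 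As $\Omega$ is connected and $f$ is real analytic, uniqueness of analytic continuation promotes this representation from a neighbourhood of $x_0$ to all of $\Omega$, exactly as in the proof of Theorem \ref{t6}.

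Finally I would invoke boundedness. By Theorem \ref{t7}, a bounded $\Theta_f$ of this form must satisfy $\re\zeta_j\in\interior(\Delta_\Omega)$ for each $j$; but $\zeta_j=i\xi_j$ forces $\re\zeta_j=0$, whence $0\in\interior(\Delta_\Omega)$. Since $\Delta_\Omega$ is a cone with apex at the origin, any ball about $0$ contained in it can be rescaled to reach every direction, so $\Delta_\Omega=\R^d$; by the definition of $\Delta_\Omega$ this means every projection of $\Omega$ onto a half-line is bounded, i.e.\ $\Omega$ is bounded. This contradicts the unboundedness of $\Omega=\Xi-\Upsilon$ (inherited from the unbounded convex $\Xi=\Upsilon$), so no frequency can occur and $f\equiv0$. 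I expect the main obstacle to be the localization step: one must verify that the restricted operator genuinely meets the hypotheses of Theorem \ref{ty} and that the local frequency sets obtained at different base points are mutually consistent, both of which are delivered by the analytic-continuation argument already used for Theorem \ref{t6}.
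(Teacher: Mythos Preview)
Your approach is essentially identical to the paper's: localize to a bounded subdomain and invoke Theorem \ref{ty} to force the frequencies to be purely imaginary, then use the boundedness criterion Theorem \ref{t7} to get $0\in\interior(\Delta_\Omega)$, and finish with the cone argument that this forces $\Omega$ bounded. The paper compresses the localization into the phrase ``the proof of Theorem \ref{t6} combined with Theorem \ref{ty}'', while you spell it out, but the logic is the same.

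One remark worth making explicit (it affects the paper's argument equally): Theorem \ref{ty} requires \emph{finite rank}, and the statement of Theorem \ref{t8} as written does not include that hypothesis. Your line ``whose rank does not exceed that of $\Theta_f$, so Theorem \ref{ty} applies'' tacitly assumes $\Theta_f$ has finite rank to begin with. Given the context (Theorems \ref{t6} and \ref{t7} are both finite-rank results, and the entire paper is about finite-rank structure), this is almost certainly an omitted hypothesis in the statement rather than a genuine defect in your strategy; but you should flag it rather than let it pass silently.
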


%\section{Classical Hankel operators}
%In this final section, we point out the implications the above theorems have on classical Hankel operators, which have been studied for at least 130 years, see e.g. \cite{peller}. To our best knowledge, these results are new, \cite{pellerPrivateCom}.
%Given $\bb f=(f_l)_{l=0}^\infty\in \ell^2(\N)$ we define $H_{\bb f}=\ell^2(\N)\rightarrow\ell^2(\N)$ by $$(H_{\bb f}g)_i=\sum_{j=0}^\infty f_{i+j}g_j.$$
%
%\begin{theorem}
%Then $H_{\bb f}$ is bounded, PSD and has rank $K\leq \infty$ if and only if $f$
%\end{theorem}

\section{Conclusions} Multidimensional versions of the Kronecker, Carath{\'e}odory-Fejer and Fischer theorems are discussed and proven in discrete and continuous settings. The former relates the rank of general domain Hankel and Toeplitz type matrices and operators to the number of exponential polynomials needed for the corresponding generating functions/symbols. The latter two include the condition that the operators be positive semi-definite. The multi-dimensional versions of the Carath{\'e}odory-Fejer theorem behave as expected, while the multi-dimensional versions of the Kronecker theorem generically yield more complicated representations, which are clearer in the continuous setting. Fischer's theorem also exhibits a simpler structure in the continuous case than in the discrete. We also show that the discrete case approximates the continuous, given sufficient sampling.

\section*{Acknowledgement} This research was partially supported by the Swedish Research Council, Grant No 2011-5589.

\bibliographystyle{plain}
\bibliography{MCref}

\end{document}